\newcommand{\ee}{\epsilon}
\newcommand{\al}{\alpha}
\newcommand{\DD}{\Delta}
\newcommand{\lbd}{\lambda}
\newcommand{\R}{\mathbb{R}}
\newcommand{\N}{\mathbb{N}}
\DeclareMathOperator*{\dvg}{\mbox{div}}
\DeclareMathOperator*{\curl}{\mbox{curl}}
\DeclareMathOperator*{\essup}{\mbox{ess sup}}
\newtheorem{theorem}{Theorem}[section]
\newtheorem{corollary}{Corollary}[theorem]
\newtheorem{lemma}[theorem]{Lemma}
\theoremstyle{remark}
\newtheorem{remark}{Remark}
\theoremstyle{definition}
\newtheorem{definition}{Definition}[section]
\numberwithin{equation}{section}
\def\Xint#1{\mathchoice
{\XXint\displaystyle\textstyle{#1}}%
{\XXint\textstyle\scriptstyle{#1}}%
{\XXint\scriptstyle\scriptscriptstyle{#1}}%
{\XXint\scriptscriptstyle\scriptscriptstyle{#1}}%
\!\int}
\def\XXint#1#2#3{{\setbox0=\hbox{$#1{#2#3}{\int}$ }
\vcenter{\hbox{$#2#3$ }}\kern-.6\wd0}}
\def\avint{\Xint-}
\begin{document}

\title{Decay Estimate for some Toy-models related to the Navier-Stokes system}

\author{F.~Hounkpe\footnote{Email address: \texttt{hounkpe@maths.ox.ac.uk}}
}
\affil{Mathematical Institute, University of Oxford, Oxford, UK}

\maketitle

\begin{abstract}
We prove in this paper a decay estimate for scaling invariant local energy solutions for some toy-models related to the incompressible Navier-Stokes system. 
\end{abstract}
\section{Introduction}
We consider the following models
\begin{equation}\label{Toy-Mod1}
    \partial_t u - \DD u -\kappa \nabla \dvg u + u\cdot \nabla u + \frac{u}{2} \dvg u = 0\quad \mbox{in }\R^3 \times (0,+\infty)
\end{equation}
and 
\begin{equation}\label{Toy-Mod2}
    \partial_t u - \DD u -\kappa \nabla \dvg u + \dvg \left(u\otimes u + \frac{|u|}{2} I_3\right) = 0\quad \mbox{in }\R^3 \times (0,+\infty), 
\end{equation}
where $I_3$ denotes the identity matrix in $3$D and $\kappa\geq 0$. Notice that if $u$ is a solution of either one of the previous systems, then for all $\lbd>0$, the vector field $u^{\lbd}: (x,t)\mapsto \lbd u(\lbd x, \lbd^2 t)$ is also a solution to that system. An interesting question now is whether or not one can construct a scale invariant solution $u$, i.e. $u^{\lbd} = u$ for all $\lbd>0$, to system \eqref{Toy-Mod1} and \eqref{Toy-Mod2}. This question was tackled and solved in \cite{Jia14} for the Cauchy problem for the $3$D incompressible Navier-Stokes system 
\begin{equation}\label{NS}
\left.
    \begin{gathered}
        \partial_t v - \DD v + v \cdot \nabla v + \nabla p = 0,\\
        \dvg v = 0,\\
        v|_{t=0} = u_0,
    \end{gathered}
\right\}    
\end{equation}
where $\lbd u_0(\lbd x) = u_0$ for all $\lbd>0$. To achieve this, Jia and \v{S}ver\'ak looked for solutions of the form $v(x,t) = \frac{1}{\sqrt{t}}U(\frac{x}{\sqrt{t}})$, where $U$ satisfies the following system
\begin{equation}\label{E1.4}
    \left.
    \begin{gathered}
    -\Delta U + U\cdot \nabla U -\frac{x}{2}\cdot \nabla U - \frac{U}{2} + \nabla P = 0,\\
    \dvg U = 0.
    \end{gathered}
    \right\}\mbox{in }\R^3.
\end{equation}
And they require the following asymptotics on $U$
\begin{equation}\label{E1.5}
    |U(x)-u_0(x)| = o(|x|^{-1})\quad\mbox{as }|x|\to \infty.
\end{equation}
They solve the problem \eqref{E1.4}-\eqref{E1.5} thanks to Leray-Schauder degree theory applied in a suitable function space. This was make possible by some quite technical decay estimates (see Theorem 4.1 in \cite{Jia14}). Our goal in this note is to establish a similar decay estimate for system \eqref{Toy-Mod1} and \eqref{Toy-Mod2}. The particularity here being that the leading term in our systems is the Lam\'e operator for which the required regularity results we need seem not to appear in the literature. We will focus mostly on system \eqref{Toy-Mod1} since the computations for system \eqref{Toy-Mod2} are the same if not simpler because of the divergence structure of the non-linearity.\\
In section 2 and 3, we prove the intermediate results we need for the proof of our main theorem; those are $\ee-$regularity results for a generalised version of system \eqref{Toy-Mod1} and a so-called local near initial time regularity result. In section 4, we prove our main result.
\subsection{Preliminaries}
Before continuing our development, let us explain our notations
\[ z=(x,t), \quad z_0=(x_0,t_0),\quad B(x_0,R) = \{ |x-x_0|<R \};\]
\[ Q(z_0,R) = B(x_0,R)\times (t_0-R^2,t_0); \]
\[ B(r) = B(0,r),\quad Q(r) = Q(0,r),\quad B=B(1),\quad Q = Q(1); \]
For $\Omega$ an open subset of $\R^n$ and $-\infty\leq T_1<T_2 \leq +\infty$. Set $Q_{T_1,T_2}:= \Omega \times (T_1,T_2)$. We will be using
$L_{m,n}(Q_{T_1,T_2}):= L_n(T_1,T_2;L_m(\Omega))$, the Lebesgue space with the norm
\[
\|v\|_{m,n,Q_{T_1,T_2}} = \begin{cases}
\left( \int_{T_1}^{T_2}\|v(\cdot,t)\|^n_{L_m(\Omega)}dt\right)^{1/n},\quad & 1\leq n< \infty\\
\mbox{ess}\displaystyle\sup_{(T_1,T_2)}\|v(\cdot,t)\|_{L_m(\Omega)},\quad & n= \infty,
\end{cases}
\]
\[ L_m(Q_{T_1,T_2})=L_{m,m}(Q_{T_1,T_2}), \quad \|v\|_{m,m,Q_{T_1,T_2}} = \|v\|_{m,Q_{T_1,T_2}}; \]
$W^{1,0}_{m,n}(Q_{T_1,T_2})$, $W^{2,1}_{m,n}(Q_{T_1,T_2})$ are the Sobolev spaces with mixed norm,
\[ W^{1,0}_{m,n}(Q_{T_1,T_2}) = \left\{ v,\nabla v \in L_{m,n}(Q_{T_1,T_2}) \right\}, \]
\[ W^{2,1}_{m,n}(Q_{T_1,T_2}) = \left\{ v,\nabla v, \nabla^2 v, \partial_t v \in L_{m,n}(Q_{T_1,T_2}) \right\}, \]
\[ W^{1,0}_m(Q_{T_1,T_2}) = W^{1,0}_{m,m}(Q_{T_1,T_2}), \quad W^{2,1}_m(Q_{T_1,T_2}) = W^{2,1}_{m,m}(Q_{T_1,T_2}).\]
We use the following mean value notations:
\[
(f)_{z_0,R} = \frac{1}{|Q(R)|}\int_{Q(z_0,R)}g(z)dz,\quad (g)_{,R} = (g)_{0,R},
\]
where $|\Omega|$, in the above, stands for the $4$-dimensional Lebesgue measure of the domains $\Omega$.\\
We use $c$ or $C$ to denote an absolute constant and we write $C(A,B,\ldots)$ when the constant depends on the parameters $A,B,\ldots$\\
Finally, we denote by $e^{\nu \Delta t}$ the semigroup associated to the heat equation 
\begin{equation}\label{E1.6}
    \partial_t u - \nu \Delta u = 0.
\end{equation}
To be more precise, $u(x,t) = e^{\nu \Delta t}u_0 (x)$ denotes the unique solution of \eqref{E1.6} in $\mathbb{R}^n\times (0,\infty)$ with initial data $u_0$. Moreover, if $u_0$ belongs to a suitable function space, the following formula is available. 
\begin{equation*}
    e^{\nu \Delta t}u_0 (x) = \int_{\mathbb{R}^n}\Gamma_{\nu}(x-y,t)u_0(y) dy, 
\end{equation*}
where 
\[
\Gamma_{\nu}(x,t) = \frac{1}{(4\pi\nu t)^{\frac{n}{2}}}\exp\left( -\frac{|x|^2}{4\nu t} \right).
\]
In order to state our main result we need the following definition. Our setting is as follows:
we take $u_0 \in L_{2,loc}(\R^3)$ such that $\sup_{x_0 \in \R^3}\int_{B_1(x_0)}|u|^2 dx < \infty$.
\begin{definition}\label{Def1.1}
A vector field $u\in L_{2,loc}(\R^3 \times [0,\infty))$ is called a Leray solution or local energy solution to system \eqref{Toy-Mod1} with initial data $u_0$ (as above) if 
\begin{enumerate}
    \item For all $R>0$, we have
    \[ 
    \sup_{x_0 \in \R^3}\left( \essup_{0\leq t < R^2}\int_{B(x_0,R)}|u(x,t)|^2 dx + \int_0^{R^2}\int_{B(x_0,R)}|\nabla u(x,t)|^2 dx dt \right) < \infty;
    \]
    \item The vector field $u$ solves the Cauchy problem for \eqref{Toy-Mod1} in the following sense:
    \[
    \partial_t u - \DD u - \kappa \nabla \dvg u + u\cdot \nabla u + \frac{u}{2}\dvg u = 0\quad\mbox{in }\mathcal{D}'(\R^3 \times (0,\infty)),
    \]
    and 
    \[
    \|u(\cdot,t) - u_0\|_{L_2(K)} \xrightarrow[t\to 0^+]{} 0,
    \]
    for any compact set $K\subset \R^3$;
    \item For any smooth $\phi\geq 0$ with supp$\phi \subset\subset \R^3 \times (0,\infty)$, the following local energy inequality holds
    \begin{multline*}
        \int_0^{\infty}\int_{\R^3}\left(|\nabla u|^2 + \kappa(\dvg u)^2\right)\phi(x,t)dx dt \leq \int_0^{\infty}\int_{\R^3}\frac{|u|^2}{2}(\partial_t\phi + \DD \phi)dx dt\\ + \int_0^{\infty}\int_{\R^3}(\frac{|u|^2}{2} - \kappa\dvg u)u\cdot \nabla \phi dx dt
    \end{multline*}
\end{enumerate}
\end{definition}
An analogous definition is available for system \eqref{Toy-Mod2}. The proof of existence of these Leray solutions follows the same lines as the one of the so-called Lemari\'e-Rieusset (see e.g. \cite{Lemar02} Chapters 32 \& 33 or \cite{Ser14} Appendix B) and will be presented elsewhere.
\subsection{Main result}
Take $u_0 = (u_0^1,u_0^2,u_0^3)$ which is a $(-1)$-homogeneous vector field such that $u_0|_{\partial B_1} \in C^{\infty}(\partial B_1)$. In this case, one can steadily show that
\[
|\partial^{\alpha} u_0(x)| \leq \frac{C(\alpha,u_0)}{|x|^{1+|\alpha|}},\quad \forall \alpha\in \N^3.
\]
We have the following decay estimate
\begin{theorem}\label{Thm1.1}
Let $u_0$ as above, and $u$ be a scale invariant Leray solution to system \eqref{Toy-Mod1} or system \eqref{Toy-Mod2}. Then $U(\cdot) := u(\cdot,1)$, the solution profile at time $t=1$, belongs to $C^{\infty}(\R^3)$ and 
\[
|\partial^{\al}(U - S_{\kappa}(1)u_0)(x)| \leq \frac{C(\al,\kappa,u_0)}{(1 + |x|)^{3 + |\al|}},
\]
for all $\al\in \N^3$ (with $|\al| = \al_1 + \al_2 + \al_3$). Here $S_{\kappa}(t)$ denotes the semigroup associate to the time dependent Lam\'e system:
\[
\partial_t f - \DD f - \kappa\nabla \dvg f = 0.
\]
\end{theorem}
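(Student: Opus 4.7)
I would follow the self-similar scheme of Jia--\v{S}ver\'ak for \eqref{NS}, adapted to the Lam\'e principal part. Write $U=u(\cdot,1)$ and exploit scale invariance $u(x,t)=t^{-1/2}U(x/\sqrt t)$: pointwise control of $U(y)$ as $|y|\to\infty$ is equivalent to control of $u(x,t)$ in neighbourhoods of points $(x_0,0)$ with $|x_0|\gg\sqrt t$. Since $u_0$ is smooth on $\partial B_1$ and $(-1)$-homogeneous, the local $L^2$ energy of $u_0$ on unit balls centred far from the origin is small in the natural scale invariant sense. Feeding this smallness into the $\ee$-regularity results of Section~2 together with the local near-initial-time regularity of Section~3, and then undoing the rescaling, I would first derive $U\in C^{\infty}(\R^3)$ together with a crude a priori decay
\[
|\partial^{\al} U(y)|\leq \frac{C_{\al}}{|y|^{1+|\al|}},\qquad |y|\geq 1,\ \al\in\N^3.
\]

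\textbf{Duhamel representation.} Next, set $v=u-S_{\kappa}(t)u_0$, so that $v|_{t=0}=0$ and
\[
\partial_t v-\DD v-\kappa\nabla\dvg v = -u\cdot\nabla u - \frac{u}{2}\dvg u =: F.
\]
Duhamel's formula then yields
\[
U(x)-S_{\kappa}(1)u_0(x) = -\int_0^1 \bigl(S_{\kappa}(1-s)F(\cdot,s)\bigr)(x)\,ds.
\]
The crude decay of $U$ combined with the self-similar form of $u$ produces the pointwise bound $|F(y,s)|\lesssim(\sqrt s+|y|)^{-3}$. I would also need Gaussian-type kernel estimates $|\partial_x^{\al} K_{\kappa}(x,t)|\lesssim t^{-(3+|\al|)/2}e^{-c|x|^2/t}$ for the Lam\'e semigroup, which follow from the explicit representation $\widehat{K_{\kappa}}(\xi,t)=e^{-t|\xi|^2}I-\xi\otimes\xi\int_t^{(1+\kappa)t}e^{-s|\xi|^2}ds$, hence $K_{\kappa}(x,t)=\Gamma(x,t)I+\int_t^{(1+\kappa)t}\nabla^2\Gamma(x,s)\,ds$. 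Splitting the Duhamel integral into the regions $\{|y|\leq |x|/2\}$ and $\{|y|>|x|/2\}$, the far-from-origin piece is handled by the spatial decay of $F$, and the near-origin piece by the Gaussian suppression in $K_{\kappa}(x-y,1-s)$. A short bootstrap, reinjecting the improved decay of $U-S_{\kappa}(1)u_0$ into the estimate of $F$, should then upgrade the bound to the sharp rate $|x|^{-3}$. Derivative estimates $|\partial^{\al}(U-S_{\kappa}(1)u_0)|\lesssim|x|^{-3-|\al|}$ follow by differentiating inside the Duhamel formula using the bounds on $\partial^{\al} K_{\kappa}$, or alternatively via Schauder-type interior estimates for the stationary self-similar profile equation satisfied by $U$.

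\textbf{Main obstacle.} I foresee two genuinely delicate points. The first is making the pointwise kernel bounds for $S_{\kappa}$ precise and uniform in $\kappa\geq 0$; this is precisely the gap flagged in the introduction as missing from the literature, and although the representation above reduces the task to bounding time integrals of $\nabla^2\Gamma$, one must keep track of the $\kappa$-dependence of all constants carefully. The second and more substantive difficulty is closing the argument at the sharp rate $|x|^{-3}$ rather than $|x|^{-3}\log|x|$: a naive first pass produces logarithmic losses because $\int_{|y|\leq|x|/2}(\sqrt s+|y|)^{-3}\,dy\sim\log(|x|/\sqrt s)$, and removing the log requires either a bootstrap on the decay rate of $U-S_{\kappa}(1)u_0$, or a finer cancellation-based analysis exploiting the product structure of $F$.
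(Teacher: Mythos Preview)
Your first step---using the $\ee$-regularity and local near-initial-time regularity of Sections~2--3 together with scale invariance to obtain smoothness and decay of $U$---matches the paper. From that point on, however, your route diverges from the paper's, and the divergence is precisely where your anticipated difficulties arise.

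The paper does \emph{not} go through a Duhamel representation for $U-S_{\kappa}(1)u_0$. Instead it compares $U$ to $u_0$ directly, then $u_0$ to $S_{\kappa}(1)u_0$ separately, and closes by the triangle inequality. The key point you overlook is that Theorem~\ref{Thm3.3}, after bootstrapping, controls not only $\partial^{\al} u$ but also $\partial_t\partial^{\al} u$ in $L_\infty$ on balls $B(x_0,1/8)\times[0,T_2]$ with $|x_0|=8$. Integrating in time gives $|\partial^{\al} u(x_0,t)-\partial^{\al} u_0(x_0)|\leq C\,t$ for such $x_0$ and small $t$; applying this to the rescalings $u^{\lbd}$ (which are again Leray solutions with the same data) and using the $(-1-|\al|)$-homogeneity of $\partial^{\al} u_0$, one reads off directly, with $y=x_0/\sqrt{t}$,
\[
|\partial^{\al}(U-u_0)(y)|\leq \frac{C(\al,\kappa,u_0)}{|y|^{3+|\al|}},\qquad |y|>\frac{8}{\sqrt{T_2}}.
\]
No Duhamel iteration, no logarithmic loss. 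The remaining comparison $|\partial^{\al}(S_{\kappa}(1)u_0-u_0)|\lesssim(1+|x|)^{-3-|\al|}$ is handled by a Helmholtz-type splitting $u_0=u_0^{(0)}+u_0^{(1)}$ into divergence-free and curl-free $(-1)$-homogeneous parts, so that $S_{\kappa}(t)u_0=e^{\DD t}u_0^{(0)}+e^{(1+\kappa)\DD t}u_0^{(1)}$; each piece is then a scalar heat semigroup acting on a smooth $(-1)$-homogeneous function, for which the estimate is classical. This also sidesteps your first flagged obstacle: no pointwise kernel bounds for $S_{\kappa}$ beyond the heat kernel are needed.

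Your Duhamel scheme is not wrong in spirit and resembles arguments used elsewhere for Navier--Stokes, but here it is strictly harder: the $\log|x|$ loss you identify is genuine at the level you state it, and removing it would require either a nontrivial bootstrap or exploiting cancellation in $F$. The conceptual shortcut to take away is that the time-Lipschitz information $\partial_t\partial^{\al} u\in L_\infty$ from Section~3, once fed through scale invariance, already encodes the sharp $|y|^{-3}$ rate.
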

\section{\texorpdfstring{$\epsilon$}{TEXT}-Regularity}
We prove in this section an $\epsilon$-regularity criteria similar to that of Caffarelli-Kohn-Nirenberg (see \cite{Caff82}) for a generalised counterpart of system \eqref{Toy-Mod1}. Our setting is as follows:\\
Let $\mathcal{O}$ be an open subset of $\R^{3+1}$ and $a,b\in L_m(\mathcal{O})$  with $m>5$. A function $u$ is called \textit{suitable weak solution} to 
\begin{equation}\label{E2.1}
    \partial_t u - \DD u - \kappa \nabla \dvg u + u\cdot \nabla u + \frac{u}{2}\dvg u + \frac{a}{2}\cdot \nabla u + \dvg (u\otimes \frac{a}{2}) + \dvg(b\otimes u) - \frac{b}{2}\dvg u = 0, 
\end{equation}
in $\mathcal{O}$ if $u\in L_{2,\infty}(\mathcal{O})\cap W^{1,0}_2(\mathcal{O})$, satisfies \eqref{E2.1} in the sense of distribution in $\mathcal{O}$ and
\begin{multline}\label{E2.2}
    \partial_t \frac{|u|^2}{2} - \DD \frac{|u|^2}{2} - \kappa \dvg(u\dvg u) + |\nabla u|^2 + \kappa (\dvg u)^2 + \dvg\left( (u+a)\frac{|u|^2}{2} \right)\\ + u\cdot\dvg(b\otimes u) - \frac{1}{2} u\cdot b \dvg u \leq 0
\end{multline}
in the sense of distributions. Let us point out that the term $u\cdot \dvg(a\otimes u)$ is indeed a distribution and should be understood in the following way
\[
\langle u\cdot \dvg(a\otimes u),\phi \rangle = - \int_{\mathcal{O}}a_i u_j u_{i,j}\phi dx dt - \int_{\mathcal{O}}a_i u_j u_i \phi_{,j} dx dt,
\]
for all $\phi \in C^{\infty}_0(\mathcal{O})$. All the other terms in \eqref{E2.2} obviously make sense because of the energy class of $u$ and the integrability condition on $a,b$.\\
The main theorem of this section reads as follows 

\begin{theorem}[$\epsilon$-regularity criterion]\label{Thm2.1}
Let $u$ be a suitable weak solution to \eqref{E2.1} in $Q$ with $a,b\in L_{m}(Q)$ and $m>5$. Then there exists $\ee_0 = \ee_0(\kappa,m)>0$ with the following properties: if 
\begin{equation}\label{E2.3}
    \left(\avint_Q |u|^3 dz\right)^{\frac{1}{3}} + \left(\avint_Q |a|^m dz\right)^{\frac{1}{m}} + \left(\avint_Q |b|^m dz\right)^{\frac{1}{m}} \leq \ee_0,
\end{equation}
then $u$ is H\"older continuous in $\overline{Q(\frac{1}{2})}$ with exponent $\al = \al(m)\in(0,1)$ and 
\begin{equation}\label{E2.4}
    \|u\|_{C^{\al,\frac{\al}{2}}(\overline{Q(\frac{1}{2})})} \leq C(\ee_0,\kappa,m).
\end{equation}
\end{theorem}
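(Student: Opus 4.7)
My plan is to adapt the Caffarelli--Kohn--Nirenberg $\epsilon$-regularity blueprint (as in \cite{Caff82}) to the present Lam\'e-type setting, treating the lower-order drift terms involving $a,b$ perturbatively thanks to the subcritical integrability $m>5$. I would first introduce the standard scale-invariant excess functionals at $z_0$ and radius $r$,
\[
A(z_0,r) = \tfrac{1}{r}\essup_{t_0-r^2<t<t_0}\!\!\int_{B(x_0,r)}\!|u|^2 dx,\quad E(z_0,r)=\tfrac{1}{r}\!\!\int_{Q(z_0,r)}\!\!(|\nabla u|^2 + \kappa(\dvg u)^2)dz,
\]
\[
C(z_0,r)=\tfrac{1}{r^2}\!\!\int_{Q(z_0,r)}\!\!|u|^3dz,\qquad K(z_0,r)=r^{1-5/m}\Bigl[\|a\|_{m,Q(z_0,r)}+\|b\|_{m,Q(z_0,r)}\Bigr],
\]
all of which are invariant under the natural parabolic scaling of \eqref{E2.1}. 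The factor $r^{1-5/m}$ is a positive power of $r$ precisely because $m>5$, and it provides the small gain that will absorb the drift contributions in the iteration.

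The heart of the argument is a \emph{decay lemma}: there exist $\theta\in(0,1/4)$ and $C_\star,q>0$ such that, whenever $Q(z_0,r)\subset Q$,
\[
C(z_0,\theta r)\leq C_\star\theta^{\beta}\bigl(A+E+C\bigr)(z_0,r)+C_\star\theta^{-q}\bigl(C(z_0,r)^{3/2}+K(z_0,r)^{\gamma}\bigr),
\]
for some $\beta,\gamma>0$. I would prove this by splitting $u=v+w$ on $Q(z_0,r)$, where $v$ solves the homogeneous time-dependent Lam\'e system $\partial_t v-\DD v-\kappa\nabla\dvg v=0$ with Cauchy data $u(\cdot,t_0-r^2)\mathbb{1}_{B(x_0,r)}$. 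Fourier analysis / Helmholtz decomposition reveals that the symbol $|\xi|^2 I+\kappa\xi\xi^\top$ has eigenvalues $|\xi|^2$ and $(1+\kappa)|\xi|^2$, so the semigroup $S_\kappa(t)$ decouples into two scalar heat semigroups with diffusivities $1$ and $1+\kappa$ on the divergence-free and curl-free components respectively; this gives me classical interior Campanato-type decay $\int_{Q(\theta r)}|v|^3\leq C\theta^5\int_{Q(r)}|v|^3$. The perturbation $w$ carries the nonlinear source and the $a,b$ drift; it is controlled in energy norm via \eqref{E2.2} by Cauchy--Schwarz, H\"older, and Sobolev interpolation, with the $K$-terms picking up the small factor $r^{1-5/m}$. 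A parabolic Sobolev interpolation then converts the energy bound back into an $L^3$ bound on $w$. Choosing $\theta$ small so that $C_\star\theta^\beta<\tfrac14$ and using the smallness hypothesis \eqref{E2.3} to make the second term a small perturbation, one infers that a combined excess $\Psi(z_0,r):=A+E+C+K$ satisfies $\Psi(z_0,\theta r)\leq \tfrac12\Psi(z_0,r)$ for all $z_0\in Q(1/2)$ and all $r\leq 1/2$.

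Iterating the decay yields $\Psi(z_0,r)\leq c\, r^{2\al}$ uniformly in $z_0\in\overline{Q(1/2)}$ for some $\al=\al(m)>0$. A Morrey--Campanato argument combined with parabolic $L^{p,q}$ maximal regularity for the Lam\'e system applied to \eqref{E2.1} rewritten in divergence form $\partial_t u-\DD u-\kappa\nabla\dvg u=\dvg F+G$ (with $F=-u\otimes u-\tfrac12 a\otimes u-b\otimes u$ absorbing the nonlinear and drift pieces, and $G$ collecting the remaining lower-order terms, both in $L^{p}_{\rm loc}$ for some $p>5/2$) upgrades this decay first to $u\in L^\infty(\overline{Q(1/2)})$ and then, by a standard bootstrap, to the H\"older bound \eqref{E2.4}. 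I expect the main obstacle to be the technical input at the linear level: unlike the scalar heat operator, the parabolic Lam\'e semigroup has not been systematically catalogued in the suitable-weak-solution literature, so I would have to establish (or extract from \cite{Lemar02} and the Helmholtz decomposition) the pointwise gradient estimates and Calder\'on--Zygmund / maximal-regularity bounds for $S_\kappa(t)$ with constants uniform in $\kappa\geq 0$. Once these linear ingredients are in hand, the nonlinear CKN iteration carries through with only book-keeping modifications for the two extra $a,b$-drift terms.
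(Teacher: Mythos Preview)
Your proposal is a viable route, but it is genuinely different from the argument in the paper. You set up a direct Caffarelli--Kohn--Nirenberg excess iteration on the scale-invariant quantities $A,E,C,K$, proving a quantitative decay lemma by splitting $u=v+w$ at each scale and iterating. The paper instead follows the Lin--Seregin compactness scheme: the central object is the \emph{oscillation} $Y(z_0,R;u)=\bigl(\avint_{Q(z_0,R)}|u-(u)_{z_0,R}|^3\bigr)^{1/3}$, and the decay (Lemma~\ref{L2.5}) is obtained by contradiction. Assuming failure, one rescales $v^k=(u^k-(u^k)_{,1})/\ee_k$; the subtraction of the mean linearises the limit problem, so after extracting a limit via Aubin--Lions one lands on the perturbed linear Lam\'e system of Lemma~\ref{L2.4}, whose bootstrap regularity furnishes the contradiction. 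The iteration (Lemma~\ref{L2.6}) and Campanato conclusion then follow. Your approach would in principle yield an explicit $\ee_0$ and avoids compactness, at the cost of carrying the full family $A,E,C,K$ through the iteration and redoing energy estimates at each step; the paper's approach is non-constructive in $\ee_0$ but offloads all the hard work onto the linear theory once and for all.

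One technical caveat on your splitting: defining $v$ via the Cauchy problem with data $u(\cdot,t_0-r^2)\1_{B(x_0,r)}$ leaves $w=u-v$ with no lateral control on $\partial B(x_0,r)$, so the energy bound you invoke for $w$ is not ``via \eqref{E2.2}'' but has to be assembled from the $u$-energy, the $v$-energy, and cross terms; the resulting estimate for $w$ then involves $(A+E)(r)$ rather than only the source, which is consistent with your stated decay lemma but makes the bookkeeping heavier. The cleaner (and more standard) choice---and the one the paper uses in the proof of Lemma~\ref{L2.4}---is the reverse: solve the \emph{global} Cauchy problem for $w$ with right-hand side equal to a cut-off of the nonlinear and drift terms, so that $w$ is directly small by Lemma~\ref{L2.2}, and then $v=u-w$ solves the homogeneous Lam\'e system in a smaller cylinder and enjoys the interior Campanato decay from Lemma~\ref{L2.3}.
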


\begin{remark}
The proof of this theorem we present here is inspired from the proof of a similar result in the case of the $3$D incompressible Navier-Stokes equations given in \cite{Lin98,Ser14}. The particular case $a=b=0$ was tackled in \cite{Rus12} (see Theorem 7.1). Finally, let us point out that the case $\kappa = 0$ and $a=b=0$ was tackled in \cite{Hou19} using a more direct method which could also be extended to the general case \eqref{E2.1} upon some minor changes.
\end{remark}

The following auxiliary results will be needed for the proof of Theorem \ref{Thm2.1}.
\begin{lemma}\label{L2.2}
Set $Q_T := \R^3 \times (0,T)$ ($T>0$); let $f\in L_{s,l}(Q_T)$ and $F\in L_{s,l}(Q_T;\R^{3\times 3})$ with $1<s,l<\infty$. There exist two functions $v$ and $w$ that uniquely solve the systems
\begin{equation*}
    \left\{
    \begin{gathered}
    \partial_t v -\DD v - \kappa \nabla \dvg v = f\quad\mbox{in }Q_T\\
    v|_{t=0} = 0\quad\mbox{in }\R^3
    \end{gathered}
    \right.\quad\mbox{and}\quad
    \left\{
    \begin{gathered}
    \partial_t w -\DD w - \kappa \nabla \dvg w = \dvg F\quad\mbox{in }Q_T\\
    w|_{t=0} = 0\quad\mbox{in }\R^3
    \end{gathered}
    \right.
\end{equation*}
such that
\begin{gather*}
\|\partial_t v\|_{L_{s,l}(Q_T)} + \|\nabla^2 v\|_{L_{s,l}(Q_T)} \leq c(\kappa,s,l)\|f\|_{L_{s,l}(Q_T)},\quad \|\nabla w\|_{L_{s,l}(Q_T)} \leq c(\kappa,s,l)\|F\|_{L_{s,l}(Q_T)};\\
\|\nabla v\|_{L_{l_1}(0,T;L_{s_1}(\R^3))} \leq c(\kappa,s,s_1,l,l_1)T^{\frac{1}{2}\left(1 - \frac{2}{l}-\frac{3}{s}+ \frac{2}{l_1} + \frac{3}{s_1} \right)}\|f\|_{L_{s,l}(Q_T)}~\forall T\geq 0,\\
\mbox{with $s\leq s_1\leq \infty,~l\leq l_1\leq \infty$ such that } 1 - \frac{2}{l}-\frac{3}{s} +\frac{2}{l_1} + \frac{3}{s_1}>0;\\
\|v\|_{L_{l_2}(0,T;L_{s_2}(\R^3))} \leq c(\kappa,s,s_2,l,l_2)T^{\frac{1}{2}\left(2 - \frac{2}{l}-\frac{3}{s}+ \frac{2}{l_2}+ \frac{3}{s_2} \right)}\|f\|_{L_{s,l}(Q_T)}~\forall T\geq 0,\\
\mbox{with $s\leq s_2\leq \infty,~l\leq l_2 \leq \infty$ such that } 2 - \frac{2}{l}-\frac{3}{s}+ \frac{2}{l_2} + \frac{3}{s_2}>0;\\
\|w\|_{L_{l_3}(0,T;L_{s_3}(\R^3))} \leq c(\kappa,s,s_3,l,l_3)T^{\frac{1}{2}\left(1 - \frac{2}{l}-\frac{3}{s}+\frac{2}{l_3} + \frac{3}{s_3} \right)}\|F\|_{L_{s,l}(Q_T)}~\forall T\geq 0,\\
\mbox{with $s\leq s_3\leq \infty,~l\leq l_3 \leq \infty$ such that } 1 - \frac{2}{l}-\frac{3}{s} + \frac{2}{l_3}+ \frac{3}{s_3}>0.
\end{gather*}
Finally, we have H\"older continuity of $v$ as soon as $\mu:= 2 - \frac{2}{l} - \frac{3}{s}>0$ and for $w$ when $\al := 1 - \frac{2}{l} - \frac{3}{s}>0$; to be more precise, we have the following estimates:
\[
|v(z_1) - v(z_2)|\leq c(\kappa,s,l)\left(|x_1 - x_2| + \sqrt{|t_1 - t_2|} \right)^{\mu}\|f\|_{L_{s,l}(Q_T)}
\]
and similarly 
\[
|w(z_1) - w(z_2)|\leq c(\kappa,s,l)\left(|x_1 - x_2| + \sqrt{|t_1 - t_2|} \right)^{\al}\|F\|_{L_{s,l}(Q_T)},
\]
for all $z_1=(x_1,t_1),z_2=(x_2,t_2)\in Q_T$.
 \end{lemma}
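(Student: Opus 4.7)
The key structural observation is that the Lam\'e operator $-\Delta - \kappa \nabla \dvg$ diagonalises under the Helmholtz decomposition. Write $v = \mathbb{P} v + \mathbb{Q} v$, where $\mathbb{P}$ is the Leray projector onto divergence-free fields and $\mathbb{Q} = I - \mathbb{P}$; then $\nabla \dvg (\mathbb{P} v) = 0$ and $\nabla \dvg (\mathbb{Q} v) = \Delta (\mathbb{Q} v)$, since $\mathbb{Q} v$ is a pure gradient. The Cauchy problem for $v$ thereby splits into two scalar heat Cauchy problems,
\begin{gather*}
\partial_t (\mathbb{P} v) - \Delta (\mathbb{P} v) = \mathbb{P} f, \qquad \partial_t (\mathbb{Q} v) - (1+\kappa) \Delta (\mathbb{Q} v) = \mathbb{Q} f,
\end{gather*}
with zero initial data, and analogously for $w$ with $\mathbb{P}(\dvg F)$ and $\mathbb{Q}(\dvg F)$ on the right. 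Both $\mathbb{P}$ and $\mathbb{Q}$ are Fourier multipliers whose symbols satisfy the Mikhlin--H\"ormander condition; by the Benedek--Calder\'on--Panzone extension of Calder\'on--Zygmund theory they are bounded on every mixed-norm space $L_{s,l}(\R^3 \times \R)$ with $1 < s, l < \infty$. Hence every estimate claimed for $v$ or $w$ reduces to the analogous estimate for the scalar heat equation with a diffusion coefficient equal to either $1$ or $1+\kappa$.

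For the maximal regularity bounds on the first line, one invokes the classical mixed-norm parabolic maximal regularity of Ladyzhenskaya--Solonnikov--Ural'tseva (or Krylov); the bound on $\nabla w$ further relies on the $L_{s,l}$-boundedness of the singular integral with kernel $\nabla^2 \Gamma_\nu$. For the sub-critical $L^{l_i}_t L^{s_i}_x$ estimates, combine the Duhamel formula with the pointwise heat-kernel bound
\[
\bigl\| \nabla^k e^{t\nu\Delta} g \bigr\|_{L_{s_i}(\R^3)} \leq C \, t^{-k/2 - \frac{3}{2}(1/s - 1/s_i)} \|g\|_{L_s(\R^3)}, \qquad k = 0, 1,
\]
then apply H\"older's inequality in the time variable and take the $L^{l_i}(0,T)$-norm. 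A short computation shows that the resulting power of $T$ is exactly $\frac{1}{2}(k_0 - 2/l - 3/s + 2/l_i + 3/s_i)$ with the correct $k_0$, and the positivity conditions on these exponents are precisely the integrability conditions needed at $s = t$; the same conclusion follows independently from parabolic scaling.

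The two H\"older estimates correspond to the formal endpoints $s_i = l_i = \infty$. Once $\mu > 0$ (resp.\ $\alpha > 0$), bound $|v(z_1) - v(z_2)|$ (resp.\ $|w(z_1) - w(z_2)|$) by splitting the Duhamel integral at the characteristic parabolic time $\tau := |x_1 - x_2|^2 + |t_1 - t_2|$: on the near piece $(t-\tau, t)$ use the raw size estimate of $\Gamma_\nu$ (or $\nabla \Gamma_\nu$) together with H\"older in $(x,t)$, and on the far piece $(0,t-\tau)$ exploit the pointwise H\"older continuity of $\Gamma_\nu$ (resp.\ $\nabla \Gamma_\nu$). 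This is the parabolic analogue of the Morrey--Campanato embedding and yields exactly the stated modulus of continuity.

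The only substantive obstacle in the above is the mixed-norm $L_{s,l}$-boundedness of the Leray--Helmholtz projectors, which demands a vector-valued Calder\'on--Zygmund argument; once this is granted, all remaining steps are classical heat-kernel computations, found with minor adaptations in Ladyzhenskaya--Solonnikov--Ural'tseva and in \cite{Lemar02}.
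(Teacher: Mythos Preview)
Your approach is essentially the same as the paper's: both exploit the Helmholtz decomposition to diagonalise the Lam\'e operator into two scalar heat equations with diffusivities $1$ and $1+\kappa$, then invoke classical heat-equation estimates (mixed-norm maximal regularity, heat-kernel bounds, Calder\'on--Zygmund theory). The paper phrases the decomposition concretely by introducing a scalar potential $q$ with $\Delta q = \dvg\dvg F$ (so that $\nabla q = \mathbb{Q}(\dvg F)$) rather than via the abstract projectors $\mathbb{P},\mathbb{Q}$, and derives the H\"older bound through Campanato's characterisation instead of a Duhamel-kernel splitting, but these differences are purely presentational.
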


\begin{proof}[Sketch of proof]
Uniqueness is straightforward. For the existence part, we present only the proof for the function $w$ since the function $v$'s case follows the same ideas.\\ 
There exists a function $q$ (using the Newtonian representation for solutions of the Poisson equation together with singular integrals' theory) such that
\[ \DD q = \dvg\dvg(F), \]
and 
\[ \|q\|_{L_{s,l}(Q_T)} \leq c \|F\|_{L_{s,l}(Q_T)} \]
Next, we introduce the function $F_0 := \dvg (q I_3 - F)$ and let us notice that $\dvg F_0 = 0$ in $\mathcal{D}'(Q_T)$. From well-known solvability results for the heat equation (see e.g. \cite{Kry08,Lady68}), we have the existence of two functions $w^1$ and $w^2$ such that
\begin{equation}\label{E2.5}
    \left\{
    \begin{gathered}
    \partial_t w^1 -(1+\kappa)\DD w^1 = \nabla q\quad\mbox{in }Q_T\\
    w^1|_{t=0} = 0\quad\mbox{in }\R^3
    \end{gathered}
    \right.\quad\mbox{and}\quad
    \left\{
    \begin{gathered}
    \partial_t w^2 -\DD w^2 = F_0\quad\mbox{in }Q_T\\
    w^2|_{t=0} = 0\quad\mbox{in }\R^3
    \end{gathered}
    \right.
\end{equation}
Moreover, the following estimate is available:
\begin{gather*}
    \|\nabla w^1\|_{L_{s,l}(Q_T)} + \|\nabla w^2\|_{L_{s,l}(Q_T)} \leq c(\kappa,s,l)\|F\|_{L_{s,l}(Q_T)}.\\
    \|w^1\|_{L_{l_3}(0,T;L_{s_3}(\R^3))} + \|w^2\|_{L_{l_3}(0,T;L_{s_3}(\R^3))} \leq c(\kappa,s,s_3,l,l_3)T^{\frac{1}{2}\left(1 - \frac{2}{l}-\frac{3}{s} + \frac{3}{l_3} + \frac{3}{s_3} \right)}\|F\|_{L_{s,l}(Q_T)}\\
    \forall T\geq 0,\mbox{ with $s\leq s_3\leq \infty,~l\leq l_3\leq\infty$ such that } 1 - \frac{2}{l}-\frac{3}{s} + \frac{3}{l_3} + \frac{3}{s_3}>0.
\end{gather*}
The latter estimate comes from well-known properties of the volume heat potential but the former is a bit more subtle; we refer to \cite{Kry01} (Theorem 1.1) for a proof of this statement. Next, using Campanato's characterisation for H\"older continuous functions, one gets without too much difficulty (and with the help of Poincaré's inequality on balls) that
\[
|w^1(z_1) - w^1(z_2)| + |w^2(z_1) - w^2(z_2)|\leq c(\kappa,s,l)\left(|x_1 - x_2| + \sqrt{|t_1 - t_2|} \right)^{\al}\|F\|_{L_{s,l}(Q_T)},
\]
for all $z_1=(x_1,t_1),z_2=(x_2,t_2)\in Q_T$ as long as $\al>0$.\\
Finally, notice that $\curl w^1 = \dvg w^2 = 0$ in $Q_T$ and $\DD w^1 = \nabla \dvg w^1$ (at least in the sense of distributions); and we are done by setting $w := w^1 + w^2$.
\end{proof}
\begin{remark}
A suitable scaling argument in \eqref{E2.5} allow us to see how the constants in the above estimates depend on $\kappa$.
\end{remark}
Next, we have the following local regularity result for the time-dependent Lam\'e system.
\begin{lemma}[Local regularity]\label{L2.3}
Let $u\in L_2(Q)$ such that 
\[
\partial_t u - \DD u - \kappa \nabla \dvg u = 0\mbox{ in }\mathcal{D}'(Q). 
\]
Then, for any $k=0,1,2,\ldots$ and any $0<\varrho<1$, there exists $C = C(\kappa,\varrho,k)>0$ such that
\[
\sup_{(x,t)\in Q(\varrho)}|\nabla^k u(x,t)| \leq C \left(\avint_Q |u|^2 dz \right)^{\frac{1}{2}}.
\]
\end{lemma}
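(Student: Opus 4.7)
The strategy is a Caccioppoli-plus-bootstrap argument that exploits the fact that the Lam\'e operator $\mathcal{L}:=\partial_t-\DD-\kappa\nabla\dvg$ has constant coefficients, so that it commutes with translations and with convolution by mollifiers. Concretely, if I mollify $u$ in both $x$ and $t$ against a smooth kernel of scale $\epsilon$, the resulting $u_{\epsilon}$ is smooth, solves $\mathcal{L}u_{\epsilon}=0$ classically on any compactly contained subcylinder once $\epsilon$ is small enough, and converges to $u$ in $L_2$ on compact subsets of $Q$. All the integrations by parts below will first be carried out on $u_{\epsilon}$ and then passed to the limit. This mollification trick is the one subtle point of the proof and is what legitimises testing the distributional equation against a function that depends on $u$ itself.

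The first substantive step is a standard Caccioppoli-type energy estimate: given $0<\varrho_1<\varrho_2<1$ and a smooth cutoff $\phi$ equal to $1$ on $Q(\varrho_1)$ and supported in a slight enlargement inside $Q(\varrho_2)$, testing $\mathcal{L}u_{\epsilon}=0$ against $\phi^2 u_{\epsilon}$ and applying Young's inequality (where the Lam\'e term produces $\kappa(\dvg u)^2$ with a good sign) yields, after passing to the limit $\epsilon\to 0$,
\[
\essup_{-\varrho_1^2<t<0}\int_{B(\varrho_1)}|u(\cdot,t)|^2\,dx+\int_{Q(\varrho_1)}\bigl(|\nabla u|^2+\kappa(\dvg u)^2\bigr)\,dz\leq C(\kappa,\varrho_1,\varrho_2)\int_{Q(\varrho_2)}|u|^2\,dz.
\]
Because $\mathcal{L}$ commutes with every $\partial_x^{\alpha}$, each spatial derivative $\partial_x^{\alpha}u_{\epsilon}$ is again a solution of the Lam\'e system, so I may iterate this estimate on a chain of nested cylinders $Q(\varrho)\subset Q(\varrho_1)\subset\cdots\subset Q(\varrho_N)=Q$ to obtain, for every $k\geq 0$ and every $0<\varrho<1$,
\[
\|\nabla^k u\|_{L_{2,\infty}(Q(\varrho))}+\|\nabla^{k+1} u\|_{L_2(Q(\varrho))}\leq C(\kappa,\varrho,k)\|u\|_{L_2(Q)}.
\]

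Time derivatives now come for free: the equation gives $\partial_t u=\DD u+\kappa\nabla\dvg u$, and by induction $\partial_t^j\partial_x^{\alpha}u=(\DD+\kappa\nabla\dvg)^j\partial_x^{\alpha}u$ is a purely spatial differential operator of order $2j+|\alpha|$ applied to $u$. Consequently every mixed space--time derivative of $u$ is controlled in $L_2(Q(\varrho))$ by the previous step. Since the parabolic cylinder has topological dimension $4$, a standard Sobolev embedding $W^{k,2}(Q(\varrho'))\hookrightarrow C^0(\overline{Q(\varrho)})$ (valid for $\varrho<\varrho'<1$ once $k$ is large enough) converts these high-order $L_2$ bounds into the desired pointwise estimate
\[
\sup_{(x,t)\in Q(\varrho)}|\nabla^k u(x,t)|\leq C(\kappa,\varrho,k)\left(\avint_Q|u|^2\,dz\right)^{1/2}.
\]
The main obstacle, as already emphasised, is solely the Caccioppoli step for distributional solutions, which the mollification resolves cleanly thanks to the constancy of the coefficients; all subsequent steps are routine iteration and embedding arguments.
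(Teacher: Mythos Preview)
Your proof is correct but proceeds by a genuinely different route than the paper. After the same initial Caccioppoli estimate, the paper applies $\curl$ and $\dvg$ to the Lam\'e system to obtain two decoupled heat equations, $\partial_t\curl u-\DD\curl u=0$ and $\partial_t\dvg u-(1+\kappa)\DD\dvg u=0$, invokes classical local regularity for the heat equation to bound $\nabla^k\curl u$ and $\nabla^k\dvg u$ pointwise, and then recovers $u$ via the identity $-\DD u=\curl\curl u-\nabla\dvg u$ together with interior elliptic estimates. You instead iterate Caccioppoli directly on the spatial derivatives $\partial_x^{\alpha}u$ (each of which solves the same Lam\'e system thanks to the constant coefficients), convert time derivatives to spatial ones through the equation, and close with a Sobolev embedding in the four-dimensional parabolic variable. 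Your argument is more self-contained and applies verbatim to any constant-coefficient parabolic system, without any appeal to the heat-equation literature; the paper's curl/div decomposition, by contrast, reduces the question at once to known black boxes, makes the $\kappa$-dependence explicit, and dovetails with the same splitting used elsewhere in the paper (in Lemma~\ref{L2.2} and in the construction of the semigroup $S_{\kappa}(t)$).
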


\begin{proof}
We see without too much difficulty that
\begin{equation}\label{E2.6}
    \int_{Q((1+\varrho)/2)}|\nabla u|^2 dz \leq c(\kappa,\varrho)\int_{Q}|u|^2dz,
\end{equation}
(for an arbitrary $0<\varrho<1$) and 
\[ 
\partial_t \curl u - \DD\curl u = 0\quad\mbox{and}\quad \partial_t\dvg u - (1+\kappa)\DD\dvg u = 0,
\]
in the sense of distributions. Next, from local well-known regularity results for the heat equation (see e.g. \cite{Kry08,Lady68})
\begin{gather*}
    \sup_{(x,t)\in Q((1+3\varrho)/4)}|\nabla^k \curl u(x,t)| \leq C(\varrho,k) \left(\int_{Q((1+\varrho)/2)} |\curl u|^2 dz \right)^{\frac{1}{2}}\\
    \sup_{(x,t)\in Q((1+3\varrho)/4)}|\nabla^k \dvg u(x,t)| \leq C(\kappa,\varrho,k) \left(\int_{Q((1+\varrho)/2)} |\dvg u|^2 dz \right)^{\frac{1}{2}} 
\end{gather*}
for any $k=0,1,2\ldots$ Thus, from \eqref{E2.6}, we have 
\begin{equation}\label{E2.7}
    \sup_{(x,t)\in Q((1+3\varrho)/4)}(|\nabla^k \curl u(x,t)| + |\nabla^k \dvg u(x,t)|) \leq c(\kappa,\varrho,k)\left(\int_{Q} |u|^2 dz \right)^{\frac{1}{2}},
\end{equation}
for any $k=0,1,2\ldots$ Finally, using the identity
\[
-\DD u = \curl(\curl u) - \nabla \dvg u \mbox{ in }\mathcal{D}'(Q), 
\]
together with the stationary analogue of the first estimate in Theorem 2.4.9 of \cite{Kry08} (for instance) and taking into account \eqref{E2.7}, we have that the lemma is proved. 
\end{proof}
Our next lemma gives us an estimate for a generalised time dependent Lam\'e system.
\begin{lemma}\label{L2.4}
Let $a,b\in L_m(Q;\R^3)$ and $F\in L_m(Q;\R^{3\times 3})$ such that
\begin{equation}\label{E2.8}
    \|a\|_{L_m(Q)} + \|b\|_{L_m(Q)} + \|F\|_{L_m(Q)} \leq M,
\end{equation}
for some arbitrary $M>0$ and $m>5$. Let also $\lbd\in \R^3$ with $|\lbd|\leq M$ and $u\in L_{2,\infty}(Q)\cap W^{1,0}(Q)$ such that
\begin{equation}\label{E2.9}
    \essup_{-1<t<0}\int_B |u(x,t)|^2 dx + \int_{-1}^0\int_B |\nabla u|^2 dx dt \leq M^2.
\end{equation}
Assume $u$ satisfies
\begin{equation*}
    \partial_t u - \DD u - \kappa \nabla \dvg u + \lbd\cdot \nabla u + \frac{\lbd}{2}\dvg u + \frac{a}{2}\cdot \nabla u + \dvg (u\otimes \frac{a}{2}) + \dvg(b\otimes u) - \frac{b}{2}\dvg u = \dvg F, 
\end{equation*}
in the sense of distributions. Then $u$ is H\"older continuous in $Q(\frac{1}{2})$ with exponent $\al = \al(m)\in (0,1)$ and 
\[
\|u\|_{C^{\al,\frac{\al}{2}}(\overline{Q(\frac{1}{2})})} \leq C(\kappa,m,M).
\]
\end{lemma}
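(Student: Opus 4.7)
The plan is to localize $u$ by a space--time cutoff, recast the equation for the cutoff as a linear Lam\'e heat equation with forcing, and then apply the global estimates of Lemma~\ref{L2.2} iteratively. At each iteration the integrability of $u$ and $\nabla u$ improves by a definite amount; the condition $m>5$ provides a uniform positive gain, so after finitely many iterations the forcings fall within the H\"older-continuity clauses of Lemma~\ref{L2.2} and the conclusion follows.

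First I would fix a decreasing sequence of radii $1>\rho_0>\rho_1>\cdots>\rho_N=1/2$ and cutoffs $\varphi_k\in C^{\infty}_c(Q(\rho_k))$ with $\varphi_k\equiv 1$ on $Q(\rho_{k+1})$ and $\varphi_k=0$ near the parabolic boundary of $Q(\rho_k)$. Setting $v_k:=\varphi_k u$ and extending it by $0$ to $\R^3\times(-1,0)$, the equation of the lemma yields
\[
\partial_t v_k-\DD v_k-\kappa\nabla\dvg v_k \;=\; g_k+\dvg G_k\quad\text{in }\R^3\times(-1,0),\qquad v_k|_{t=-1}=0,
\]
where $g_k$ and $G_k$ collect the source $\varphi_k F$, the coefficient terms $\lbd, a, b$ paired with $\varphi_k u$ or $\varphi_k\nabla u$, and the commutator terms produced by $\nabla\varphi_k$ and $\partial_t\varphi_k$ acting on $u,\nabla u,F$. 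Each of these is a product of $u$ or $\nabla u$ (restricted to $Q(\rho_k)$) with either a bounded cutoff derivative, the constant $\lbd$, or an $L_m$ coefficient, so a direct H\"older estimate places $g_k$ and $G_k$ in explicit mixed Lebesgue spaces.

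Next I would run the bootstrap. Starting from \eqref{E2.9} --- which by parabolic interpolation gives $u\in L_{10/3}(Q(\rho_0))$ and $\nabla u\in L_2(Q(\rho_0))$ --- I track the scaling indices $\tau^u_k:=2/l+3/s$ corresponding to the best $L_{s,l}$ bound available for $u$ on $Q(\rho_k)$, and the analogous $\tau^{\nabla u}_k$. The coefficient terms contribute a forcing whose scaling index is at most $\tau^u_k+5/m$ (divergence form) or $\tau^{\nabla u}_k+5/m$ (non-divergence form); the gain $5/m$ comes from H\"older against $a,b\in L_m$. Applying the four intermediate estimates of Lemma~\ref{L2.2} on $\R^3\times(-1,0)$, for any $\vree>0$ one obtains
\[
\tau^u_{k+1}\leq \max\!\bigl(\tau^{\nabla u}_k+5/m-2,\,\tau^u_k+5/m-1,\,0\bigr)+\vree,\qquad
\tau^{\nabla u}_{k+1}\leq \max\!\bigl(\tau^{\nabla u}_k+5/m-1,\,\tau^u_k+5/m,\,0\bigr)+\vree,
\]
so that, taking $\vree$ small, both sequences decrease by at least $1-5/m-\vree>0$ per step.

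After $N=N(m)$ iterations the forcings $g_N,G_N$ lie in $L_{s,l}$ spaces with $2-(\tau^{\nabla u}_N+5/m)>0$ and $1-(\tau^u_N+5/m)>0$ respectively, and the H\"older-continuity clauses of Lemma~\ref{L2.2} (applied separately to the divergence and non-divergence parts of the right-hand side) then yield
\[
\|v_N\|_{C^{\al,\al/2}(\overline{Q(\rho_N)})}\leq C(\kappa,m,M),
\]
with $\al=\al(m)\in(0,1)$. Since $v_N\equiv u$ on $\overline{Q(1/2)}=\overline{Q(\rho_N)}$, the desired estimate follows. I expect the main obstacle to be the careful bookkeeping of the bootstrap: one must check that the gain $1-5/m-\vree$ is uniform in $k$ (so that $N$ depends only on $m$) and that the constants from Lemma~\ref{L2.2}, together with those arising from differentiating the cutoffs $\varphi_k$, compound into a final bound depending only on $\kappa,m,M$. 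The commutator terms from the $\varphi_k$ are lower-order but recur at every iteration and need to be absorbed uniformly into the forcing norms.
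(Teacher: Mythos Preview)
Your bootstrap scheme is correct and reaches the same conclusion, but it is organised differently from the paper's proof. The paper does \emph{not} cut off $u$; instead it multiplies only the lower-order forcings by a purely spatial cutoff $\varphi_{\varrho,r}\in C^\infty_0(B)$, solves globally via Lemma~\ref{L2.2} for particular solutions $v^{\varrho,r},w^{\varrho,r}$, and then observes that $u-v^{\varrho,r}-w^{\varrho,r}$ satisfies the \emph{homogeneous} Lam\'e heat equation on the smaller cylinder $Q((r+\varrho)/2)$, so that Lemma~\ref{L2.3} upgrades it to $C^\infty$ for free. This decomposition avoids commutators entirely: no $\partial_t\varphi$, $\nabla\varphi\cdot\nabla u$, or $\nabla\dvg$-commutator terms ever appear, and the bookkeeping reduces to the clean recursion $1/q_{k}=1/q_{k-1}+\tfrac12(1/m-1/5)$ for $u$ and the analogous one for $\nabla u$. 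Your route, by contrast, localises $u$ directly as $v_k=\varphi_k u$, which generates the commutators you mention but allows you to bypass Lemma~\ref{L2.3} altogether and work exclusively with Lemma~\ref{L2.2}. The trade-off is more terms to track against one fewer auxiliary lemma; the gain per step you record, $1-5/m-\vree$, is the maximal one (the paper deliberately takes half of it, $\tfrac12(1-5/m)$ in scaling index, to keep the cross-conditions \eqref{E2.10} comfortably strict). One point you should make explicit: to apply the estimates of Lemma~\ref{L2.2} to $v_k$ you must identify $v_k$ with the solution that lemma produces, which requires a uniqueness statement for the linear Lam\'e Cauchy problem in the energy class; this is routine but is not stated in Lemma~\ref{L2.2} in that form, whereas the paper's ``subtract a particular solution and use Lemma~\ref{L2.3} on the remainder'' avoids the issue.
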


\begin{proof}
Let $0<\varrho<r\leq 1$ and $0\leq \varphi_{\varrho,r}\in C^{\infty}_0(B)$ such that $\varphi_{\varrho,r}\equiv 1$ in $B((r+\varrho)/2)$ and $\varphi_{\varrho,r} \equiv 0$ in $B(r)\setminus B((3 r + \varrho)/4)$. Next, let us set
\[
F^{\varrho,r} := \varphi_{\varrho,r}\left(F - u\otimes\lbd - \frac{\lbd}{2}\otimes u - u\otimes \frac{a}{2} - b\otimes u\right),~f^{\varrho,r} := \left(\frac{b}{2}\dvg u - \frac{a}{2}\cdot \nabla u\right)\varphi_{\varrho,r};  
\]
If $u\in L_q(Q(r))$ and $\nabla u\in L_p(Q(r))$ ($p,q\geq 2$) with
\begin{equation}\label{E2.10}
    \frac{1}{m} + \frac{1}{q} < \frac{1}{p} + \frac{1}{2}\left(\frac{1}{m} - \frac{1}{5}\right)\quad\mbox{and}\quad \frac{1}{m} + \frac{1}{p} - \frac{2}{5} < \frac{1}{q} + \frac{1}{2}\left(\frac{1}{m} - \frac{1}{5}\right) 
\end{equation}
(e.g. \eqref{E2.10} holds true for $q=10/3$ and $p=2$) then 
\[
F^{\varrho,r}\in L_{\frac{mq}{m+q}}(Q(r))\mbox{ and }f^{\varrho,r}\in L_{\frac{mp}{m+p}}(Q(r)).
\]
From Lemma \ref{L2.2}, we have the existence of two functions $v^{\varrho,r}$ and $w^{\varrho,r}$ such that
\begin{equation*}
    \left\{
    \begin{gathered}
    \partial_t v^{\varrho,r} -\DD v^{\varrho,r} - \kappa \nabla \dvg v^{\varrho,r} = f^{\varrho,r}\quad\mbox{in }\R^3\times(-1,0)\\
    v^{\varrho,r}|_{t=-1} = 0\quad\mbox{in }\R^3
    \end{gathered}
    \right.
\end{equation*}
and
\begin{equation*}
    \left\{
    \begin{gathered}
    \partial_t w^{\varrho,r} -\DD w^{\varrho,r} - \kappa \nabla \dvg w^{\varrho,r} = \dvg F^{\varrho,r}\quad\mbox{in }\R^3\times(-1,0)\\
    w^{\varrho,r}|_{t=-1} = 0\quad\mbox{in }\R^3
    \end{gathered}
    \right.
\end{equation*}
Moreover the following estimates are available
\[
\|w^{\varrho,r}\|_{L_{\hat{q}}(\R^3\times (-1,0))} + \|\nabla w^{\varrho,r}\|_{L_{\frac{mq}{m+q}}(\R^3\times (-1,0))}  \leq c(m,p,q)\|F^{\varrho,r}\|_{L_{\frac{mq}{m+q}}(Q(r))},
\]
with \[ \frac{1}{\hat{q}} > \frac{1}{q} + \frac{1}{m} - \frac{1}{5}, \]
and
\[
\|v^{\varrho,r}\|_{L_{\tilde{q}}(\R^3\times(-1,0))} + \|\nabla v^{\varrho,r}\|_{L_{\tilde{p}}(\R^3\times(-1,0))} \leq c(m,p,q)\|f^{\varrho,r}\|_{L_{\frac{mp}{m+p}}(Q(r))},
\]
with 
\[
\frac{1}{\tilde{p}} > \frac{1}{p} + \frac{1}{m} - \frac{1}{5}\quad\mbox{and}\quad\frac{1}{\tilde{q}} > \frac{1}{p} + \frac{1}{m} - \frac{2}{5}.
\]
Now, we choose $q_1\geq 2$ and $p_1\geq 2$ such that
\begin{equation}\label{E2.11}
    \frac{1}{q_1} = \frac{1}{q} + \frac{1}{2}\left(\frac{1}{m} - \frac{1}{5}\right)\quad\mbox{and}\quad \frac{1}{p_1} = \frac{1}{p} + \frac{1}{2}\left(\frac{1}{m} - \frac{1}{5}\right);
\end{equation}
because of \eqref{E2.10}, we have that
\begin{gather}
    \begin{split}
        \|w^{\varrho,r}\|_{L_{q_1}(Q)} + \|\nabla w^{\varrho,r}\|_{L_{p_1}(Q)}  \leq c(\kappa,m,p,q)\|F^{\varrho,r}\|_{L_{\frac{mq}{m+q}}(Q(r))},\\
    \|v^{\varrho,r}\|_{L_{q_1}(Q)} + \|\nabla v^{\varrho,r}\|_{L_{p_1}(Q)}  \leq c(\kappa,m,p,q)\|f^{\varrho,r}\|_{L_{\frac{mp}{m+p}}(Q(r))};
    \end{split}
\end{gather}
moreover \eqref{E2.10} is true for $q$ and $p$ replaced respectively by $q_1$ and $p_1$.\\
Finally, let us notice that
\begin{equation}\label{E2.13}
    \partial_t(u - v^{\varrho,r} - w^{\varrho,r}) - \DD(u - v^{\varrho,r} - w^{\varrho,r}) - \kappa \nabla\dvg(u - v^{\varrho,r} - w^{\varrho,r}) = 0
\end{equation}
in $Q((r+\varrho)/2)$. which together with Lemma \ref{L2.3} lead to $u\in L_{q_1}(Q(\varrho))$ and $\nabla u \in L_{p_1}(Q(\varrho))$ where $q_1>q$ and $p_1>p$. The goal now is to iterate this process. We set $q_0 = 10/3$, $p_0 = 2$ and we define a sequence $(q_k)$ and $(p_k)$ via the following recursive formula
\[
\frac{1}{q_k} = \frac{1}{q_{k-1}} + \frac{1}{2}\left(\frac{1}{m} - \frac{1}{5}\right)\quad\mbox{and}\quad \frac{1}{p_k} = \frac{1}{p_{k-1}} + \frac{1}{2}\left(\frac{1}{m} - \frac{1}{5}\right)
\]
and we find $u\in L_{q_k}(Q(2^{-1} + 2^{-(1+k)}))$ and $\nabla u\in L_{p_k}(Q(2^{-1} + 2^{-(1+k)}))$ for all $k\geq 0$. Now, for a large enough $k_0 = k_0(m)$, we have that
\[
\frac{m q_{k_0}}{m+q_{k_0}} >5 \mbox{ and } \frac{m p_{k_0}}{m+p_{k_0}} >\frac{5}{2}
\]
thus for $\varrho = 2^{-1} + 2^{-(2+k_0)}$ and $r = 2^{-1} + 2^{-(1+k_0)}$, we find that there exists $\alpha = \alpha(m)\in (0,1)$ such that $v^{\varrho,r},w^{\varrho,r} \in C^{\alpha,\frac{\alpha}{2}}(Q(2^{-1} + 2^{-(2+k_0)}))$. Using one more time \eqref{E2.13} and Lemma \ref{L2.3}, we get that $u\in C^{\alpha,\frac{\alpha}{2}}(\overline{Q(1/2)})$. A careful track of the constants in the above process yields also the estimate claimed in the Lemma, we omit the details here. Thus the lemma is proved.
\end{proof}
We can return now to the proof of Theorem \ref{Thm2.1}. We first prove a so-called "oscillation lemma", which roughly speaking asserts that if $u$ is of \textit{small oscillation} in $Q$, then the oscillation is even smaller in $Q_{\theta}$ for $0<\theta<1$ (see e.g. \cite{Ser14,Jia14} where an analogue of this result was proved for the incompressible Navier-Stokes equations).

In order to state our lemma and for what follows, some additional notions are needed; we introduce
\[
Y(z_0,R;u) := \left( \avint_{Q(z_0,R)}|u-(u)_{z_0,R}|^3 dz\right)^{\frac{1}{3}},~Y_{\theta}(u) := Y(0,\theta;u)
\]

\begin{lemma}\label{L2.5}
Given any numbers $\theta\in(0,1/3)$, $m>5$ and $M>0$, there are three constants $\al = \al(m)\in(0,1)$, $\epsilon = \epsilon(\theta,m,M)>0$ and $C_1=C_1(\kappa,m,M)>0$ such that for any $\|a\|_{L_m(Q)}\leq M,~\|b\|_{L_m(Q)}\leq c_0$, $c_0>0$ being a small absolute constant (to be specified later), and any suitable weak solution $u$ to \eqref{E2.1} in $Q$, satisfying the additional conditions
\[
|(u)_{,1}| \leq M,\quad Y_1(u) + |(u)_{,1}|\left(\avint_Q |a|^m dz\right)^{\frac{1}{m}} + |(u)_{,1}|\left(\avint_Q |b|^m dz\right)^{\frac{1}{m}} <\ee,
\]
the following estimate is valid:
\[ Y_{\theta}(u) \leq C_1 \theta^{\al}\left[Y_1(u) + |(u)_{,1}|\left(\avint_Q |a|^m dz\right)^{\frac{1}{m}} + |(u)_{,1}|\left(\avint_Q |b|^m dz\right)^{\frac{1}{m}} \right]. \]
\end{lemma}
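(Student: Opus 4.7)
The natural approach is a compactness/contradiction (blow-up) argument, with the exponent $\al=\al(m)\in(0,1)$ taken from Lemma \ref{L2.4}. Suppose the conclusion fails for this $\al$: then for each integer $k\geq 1$ one can find coefficients $a_k,b_k$ with $\|a_k\|_{L_m(Q)}\leq M,~\|b_k\|_{L_m(Q)}\leq c_0$, a suitable weak solution $u_k$ to \eqref{E2.1}, and $\lambda_k := (u_k)_{,1}$ with $|\lambda_k|\leq M$, such that
\[
E_k := Y_1(u_k) + |\lambda_k|\left(\avint_Q|a_k|^m\,dz\right)^{1/m} + |\lambda_k|\left(\avint_Q|b_k|^m\,dz\right)^{1/m} \xrightarrow[k\to\infty]{} 0,
\]
while $Y_\theta(u_k) > k\,\theta^{\al} E_k$. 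I would then rescale by setting $v_k := (u_k-\lambda_k)/E_k$; this gives $(v_k)_{,1}=0$, $Y_1(v_k)\leq 1$, and the contradiction hypothesis becomes $Y_\theta(v_k) > k\,\theta^{\al}$. Inserting $u_k=\lambda_k+E_k v_k$ into \eqref{E2.1} and dividing through by $E_k$ leads to
\begin{multline*}
\partial_t v_k - \DD v_k - \kappa\nabla\dvg v_k + \lambda_k\cdot\nabla v_k + \tfrac{\lambda_k}{2}\dvg v_k + \tfrac{a_k}{2}\cdot\nabla v_k + \dvg\bigl(v_k\otimes \tfrac{a_k}{2}\bigr) \\ + \dvg(b_k\otimes v_k) - \tfrac{b_k}{2}\dvg v_k + E_k\bigl(v_k\cdot\nabla v_k + \tfrac{v_k}{2}\dvg v_k\bigr) = \dvg G_k,
\end{multline*}
with $G_k := -\tfrac{1}{E_k}\bigl(\lambda_k\otimes\tfrac{a_k}{2} + b_k\otimes\lambda_k\bigr)$ satisfying $\|G_k\|_{L_m(Q)}\leq C(m)$ by the very definition of $E_k$.

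Next I would derive a Caccioppoli-type estimate directly from the above equation, testing against $v_k\varphi^2$ with a space-time cutoff $\varphi$: the control of $\|v_k\|_{L_3(Q)}$ coming from $Y_1(v_k)\leq 1$, the bounds $|\lambda_k|\leq M$ and $\|a_k\|_{L_m}\leq M$ (combined with $m>5$ and H\"older's inequality), the factor $E_k\to 0$ in front of the quadratic-in-$v_k$ terms, and the smallness $\|b_k\|_{L_m}\leq c_0$ used to absorb $\int \dvg(b_k\otimes v_k)\cdot v_k\varphi^2$ into the dissipation, together produce the uniform estimate
\[
\sup_k\bigl(\|v_k\|_{L_{2,\infty}(Q(r))} + \|\nabla v_k\|_{L_2(Q(r))}\bigr) \leq C(\kappa,m,M),\qquad \forall\,r<1.
\]
Coupled with the equation (which bounds $\partial_t v_k$ in a negative Sobolev space), an Aubin--Lions extraction then yields a subsequence along which $v_k\to v_\infty$ strongly in $L_3(Q(r))$ and weakly in $W^{1,0}_2(Q(r))$, while $\lambda_k\to\lambda_\infty$ in $\R^3$ and $a_k\rightharpoonup a_\infty,~b_k\rightharpoonup b_\infty,~G_k\rightharpoonup G_\infty$ weakly in $L_m(Q)$. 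The $E_k$-terms vanish in the limit, the mixed products $a_k v_k,~b_k v_k$ converge by weak-strong pairing (valid since $m>5>3$), and $v_\infty$ therefore satisfies an equation of the form addressed by Lemma \ref{L2.4} with parameters $(\lambda_\infty,a_\infty,b_\infty,G_\infty)$ controlled by $M$ and $m$.

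Lemma \ref{L2.4} then provides $v_\infty\in C^{\al,\frac{\al}{2}}(\overline{Q(1/2)})$ with norm bounded by some $C^\star = C^\star(\kappa,m,M)$, which gives $Y_\theta(v_\infty)\leq C^\star\theta^{\al}$ for all $\theta\leq 1/2$; strong $L_3$-convergence forces $Y_\theta(v_k)\to Y_\theta(v_\infty)$, contradicting $Y_\theta(v_k)>k\,\theta^{\al}\to\infty$. Choosing $C_1 := 2 C^\star$ thus closes the proof. I expect the main obstacle to be precisely the rescaled Caccioppoli estimate: the local energy inequality \eqref{E2.2} satisfied by $u_k$ does not transfer directly to $v_k$, so the bound must be squeezed out of the distributional equation alone while tracking the dependence on every parameter. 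The smallness $\|b_k\|_{L_m}\leq c_0$ is structurally essential, since $\dvg(b_k\otimes v_k)$ is the unique quadratic-in-$v_k$ contribution lacking the saving factor of $E_k$ and therefore must be absorbed rather than driven to zero by compactness.
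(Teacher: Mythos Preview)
Your strategy---contradiction, rescale $v_k=(u_k-\lambda_k)/E_k$, uniform energy estimate, Aubin--Lions, pass to the limit, then invoke Lemma~\ref{L2.4}---is exactly the paper's, and you correctly identify that the smallness $\|b\|_{L_m}\leq c_0$ is there to absorb the $b$-term into the dissipation in the energy estimate.

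The one point that needs correcting is your claim that the local energy inequality \eqref{E2.2} ``does not transfer directly to $v_k$,'' forcing you to ``squeeze the bound out of the distributional equation alone.'' In fact the transfer works cleanly: since $\lambda_k$ is a constant vector, one may take the inner product of $\lambda_k$ with the distributional equation \eqref{E2.1} for $u_k$ (a legitimate linear operation on distributions), subtract the result from \eqref{E2.2}, use $|u_k-\lambda_k|^2=|u_k|^2-2\lambda_k\cdot u_k+|\lambda_k|^2$, and divide by $E_k^2$. This produces a genuine local energy inequality for $v_k$---the paper's \eqref{E2.16}---and the uniform bound on $\|v_k\|_{L_{2,\infty}(Q(3/4))}+\|\nabla v_k\|_{L_2(Q(3/4))}$ then follows by exactly the cutoff/absorption argument you outline. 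By contrast, testing the rescaled equation directly with $v_k\varphi^2$ is precisely the step that cannot be justified for energy-class solutions in three dimensions: the pairing $\langle\partial_t v_k,\,v_k\varphi^2\rangle$ and the cubic term $\int|v_k|^2\,\dvg v_k\,\varphi^2$ are not a priori well-defined from $v_k\in L_{2,\infty}\cap W^{1,0}_2$ alone, which is the whole reason \eqref{E2.2} is \emph{assumed} in the definition of suitable weak solution rather than derived. So the obstacle you anticipated is not there; transfer the inequality rather than re-derive it, and the rest of your argument goes through.
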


\begin{proof}
Assume that the statement is false. This means that there exist numbers $\theta\in(0,1/3)$, $m>5$ and $M>0$ and sequences $a^k$, $b^k$ and a sequence of suitable weak solutions $u^k$ to \eqref{E2.1} (with $a$ and $b$ replaced respectively by $a^k$ and $b^k$) such that
\begin{equation*}
    \begin{gathered}
    |(u^k)_1| \leq M,\quad\|a^k\|_{L_m(Q)}\leq M,~ \|b^k\|_{L_m(Q)} \leq c_0,\\
        Y_1(u^k) + |(u^k)_{,1}|\left(\avint_Q |a^k|^m dz\right)^{\frac{1}{m}} + |(u^k)_{,1}|\left(\avint_Q |b^k|^m dz\right)^{\frac{1}{m}} = \ee_k\to 0^+\mbox{ as }k\to\infty,\\
        Y_{\theta}(u^k) > C_1\theta^{\al}\ee_k,
    \end{gathered}
\end{equation*}
for all $k\in \N$. Next, we introduce the following functions 
\[
v^k := \frac{u^k - (u^k)_{,1}}{\ee_k},\quad -F^k := \frac{(u^k)_{,1}\otimes a^k}{2\ee_k} + \frac{b^k\otimes (u^k)_{,1}}{\ee_k};
\]
we have 
\begin{equation}\label{E2.14}
    \left(\avint_Q |v^k|^3 dz \right)^{\frac{1}{3}} + \left(\avint_Q |F^k|^m dz \right)^{\frac{1}{m}} \leq 4\quad\mbox{and }Y_{\theta}(v^k) > C_1 \theta^{\al}.
\end{equation}
Moreover,
\begin{multline}\label{E2.15}
    \partial_t v^k - \DD v^k - \kappa \nabla \dvg v^k + \ee_k\left(v^k\cdot\nabla v^k + \frac{v^k}{2}\dvg v^k \right) + (u^k)_{,1}\cdot \nabla v^k + \frac{(u^k)_{,1}}{2}\dvg v^k\\ + \frac{a^k}{2}\cdot \nabla v^k + \dvg(v^k \otimes \frac{a^k}{2}) + \dvg(b^k\otimes v^k) - \frac{b^k}{2}\dvg v^k = \dvg(F^k),
\end{multline}
in the sense of distributions on $Q$ and 
\begin{multline}\label{E2.16}
    \partial_t \frac{|v^k|^2}{2} - \DD \frac{|v^k|^2}{2} - \kappa\dvg(v^k\dvg v^k) + |\nabla v^k|^2 + \kappa(\dvg v^k)^2 + \dvg\left((\ee_k v^k + a^k + (u^k)_{,1})\frac{|v^k|^2}{2} \right)\\ + v^k\cdot\dvg((u^k)_{,1}\otimes \frac{v^k}{2} + b^k\otimes v^k - F^k) - \frac{1}{2} v^k\cdot b^k \dvg v^k \leq 0,
\end{multline}
in the sense of distributions on $Q$. From the previous inequation, we get that
\begin{multline*}
    \frac{1}{2}\int_B|v^k(x,t)|^2 \phi(x,t)dx + \int_{-1}^t\int_B|\nabla v^k|^2\phi dz + \kappa\int_{-1}^t\int_B (\dvg v^k)^2\phi dz\\ \leq \int_{-1}\int_B \frac{|v^k|^2}{2}(\partial_t \phi + \DD \phi)dz + \int_{-1}^t\int_B\frac{|v^k|^2}{2}(\ee_k v^k + a^k + (u^k)_{,1})\cdot\nabla \phi dz\\ + \int_{-1}^t\int_B\phi \nabla v^k:((u^k)_{,1}\otimes \frac{v^k}{2} + b^k\otimes v^k - F^k) dz + \int_{-1}^t\int_B v^k\otimes\nabla \phi : ((u^k)_{,1}\otimes \frac{v^k}{2} + b^k\otimes v^k - F^k) dz\\ - \kappa\int_{-1}^t\int_Bv^k\cdot \nabla \phi \dvg v^k dz + \frac{1}{2}\int_{-1}\int_B\phi v^k\cdot b^k \dvg v^k dz,
\end{multline*}
for any $0\leq \phi \in C^{\infty}_0(B\times(-1,1))$. Next, we define
\[
E_k(r) := \frac{1}{2}\essup_{-r^2<t<0}\int_{B(r)}|v^k(x,t)|^2 dx + \int_{-r^2}\int_{B(r)}[|\nabla v^k(x,t)|^2 +\kappa (\dvg v^k(x,t))^2] dz, 
\]
for any $0<r<1$. Our goal now is to get an uniform estimate (in $k$) for $E_k$. To this end we start by recalling the following well-known multiplicative inequality
\[
\|v^k\|^2_{L_{\frac{10}{3}}(Q(r))} \leq C E_k(r),
\]
with $C>0$ being an universal constant. Then, for any $1/2<r_1<r_2\leq 1$, if we choose appropriately the test function $\phi$ in the above local energy inequality together with the help of H\"older's inequality and the estimates on $v^k,~a^k,~b^k$ and $F^k$, we have that:
\begin{align*}
    E_k(r_1) &\leq \frac{C}{(r_2 - r_1)^2} + \frac{C}{r_2 - r_1}\left\{ \ee_k\int_Q|v^k|^3dz + \left(\int_Q|v^k|^3 dz\right)^{\frac{2}{3}}\left(\int_Q|a^k|^3 dz\right)^{\frac{1}{3}}\right.\\&\mathrel{\phantom{=}}\left. + |(v^k)_{,1}|\int_Q|v^k|^2dz \right\} + E_k(r_2)^{\frac{1}{2}}\left\{ |(u^k)_{,1}|\left(\int_Q|v^k|^2 dz\right)^{\frac{1}{2}}\right.\\&\mathrel{\phantom{=}}\left. + \left(\int_Q|b^k|^5 dz\right)^{\frac{1}{5}}\left(\int_{Q(r_2)}|v^k|^{\frac{10}{3}} dz\right)^{\frac{3}{10}} + \left( \int_Q|F^k|^2 dz \right)^{\frac{1}{2}} \right\}\\&\mathrel{\phantom{=}}+\frac{C}{r_2 - r_1}\left\{ |(u^k)_{,1}|\int_{Q}|v^k|^2 dz + \left(\int_Q|v^k|^3 dz\right)^{\frac{2}{3}}\left(\int_Q|a^k|^3 dz\right)^{\frac{1}{3}}\right.\\&\mathrel{\phantom{=}}\left. \left( \int_Q|v^k|^2 dz\right)^{\frac{1}{2}}\left( \int_Q|F^k|^2 dz\right)^{\frac{1}{2}} \right\} + E_k(r_2)^{\frac{1}{2}}\left\{\frac{C(\kappa)}{(r_2 - r_1)^2}\int_Q|v^k|^2 dz \right\}^{\frac{1}{2}}\\
    &\leq \frac{C(\kappa,M)}{(r_2 - r_1)^2} + C(M)E_k(r_2)^{\frac{1}{2}} + (\frac{1}{4} + C\|b^k\|_{L_m(Q)})E_k(r_2)\\
    &\leq \frac{C(\kappa,M)}{(r_2 - r_1)^2} + (\frac{1}{2} + C\|b^k\|_{L_m(Q)})E_k(r_2).
\end{align*}
Note that $\|b\|_{L_m(Q)}\leq c_0$ with $c_0$ small. Therefore, if we choose $c_0$ such that $Cc_0<1/2$, we can iterate the above estimate (see e.g. \cite{Evans86} Lemma 5.2) and conclude that
\begin{equation}\label{E2.17}
    E_k(3/4) \leq C(\kappa,M),~\forall k\in \N.
\end{equation}
This together with the fact that $v^k$ satisfies \eqref{E2.15} yield
\begin{equation}\label{E2.18}
    \|\partial_t v^k\|_{L_{4/3}(-(3/4)^2,0;H^{-1}(B(3/4)))} \leq C(\kappa,M),~\forall k\in \N,
\end{equation}
where $H^{-1}(B(3/4))$ stands here for the dual of the Sobolev space $H^1_0(B(3/4))$. Now, from Aubin-Lions and Banach-Alaoglu compactness results, we can choose subsequences of $v^k,~a^k,~b^k,~(u^k)_{,1}$ and $F^k$ (which we still denote $v^k,~a^k,~b^k,~(u^k)_{,1}$ and $F^k$) such that for some $\lbd \in \R,~v\in L_3(Q(3/4)),~a,b,F\in L_m(Q(3/4))$, we have
\begin{equation*}
    \begin{gathered}
    v^k \to v \mbox{ strongly in } L_3(Q(3/4)),~ (u^k) \to \lbd\\
    a^k \rightharpoonup a,~b^k \rightharpoonup b,~F^k \rightharpoonup F\quad \mbox{weakly in } L_m(Q(3/4)). 
    \end{gathered}
\end{equation*}
Moreover, we have that
\begin{equation}
    \begin{gathered}
    \left(\int_{Q(3/4)}(|a| + |b| + |F|)^m dz \right)^{\frac{1}{m}}\leq C(M),\quad |\lbd |\leq M,\\
    \essup_{-(3/4)^2<t<0}\int_{B(3/4)}|v(x,t)|^2 dx + \int_{Q(3/4)}|\nabla v(x,t)|^2 dz \leq C(\kappa,M).
    \end{gathered}
\end{equation}
Finally, from \eqref{E2.15}, we see that
\begin{equation*}
    \partial_t v - \DD v - \kappa \nabla \dvg v + \lbd\cdot \nabla v + \frac{\lbd}{2}\dvg v + \frac{a}{2}\cdot \nabla v + \dvg (v\otimes \frac{a}{2}) + \dvg(b\otimes v) - \frac{b}{2}\dvg v = \dvg F,
\end{equation*}
in the sense of distributions in $Q(3/4)$. Thus, from Lemma \ref{L2.4}, we have that 
\[
\|v\|_{C^{\alpha,\frac{\alpha}{2}}(\overline{B(1/2)})} \leq C(\kappa,m,M),
\]
for some $\al = \al(m)\in (0,1)$. This implies that 
\begin{equation}\label{E2.20}
    Y_{\theta}(v) \leq C(\kappa,m,M)\theta^{\al};
\end{equation}
but because of \eqref{E2.14} and the strong $L_3$-convergence of $v^k$, we also have that $Y_{\theta}(v)\geq C_1 \theta^{\alpha}$. This together with \eqref{E2.20} give us
\[
C_1 \leq C(\kappa,m,M).
\]
If from the beginning, $C_1$ is chosen so that $C_1>2 C(\kappa,m,M)$, we arrive at a contradiction and the Lemma is proved. 
\end{proof}

Lemma \ref{L2.5} admits the following iterations.
\begin{lemma}\label{L2.6}
Given numbers $M>0$ and $m>5$, we choose $\theta \in (0,1/3)$ so that 
\begin{equation}
    C_1\theta^{\al - \beta} < 1\quad\mbox{and}\quad\theta < c_1
\end{equation}
where $c_1=c_1(m)>0$ is a small number to be specified later, $C_1,\al$ are as in Lemma \ref{L2.5} and $\beta := \al/2$. Then, there exists $\ee_* = \ee_*(\kappa,\theta,m,M)<\ee$ sufficiently small, such that for any $\|a\|_{L_m(Q)}\leq M,~\|b\|_{L_m(Q)}\leq c_0$ ($\ee = \ee(\theta,m,M)>0$ and $c_0>0$ being also as in Lemma \ref{L2.5}), and any suitable weak solution $u$ to \eqref{E2.1} in $Q$, satisfying the additional conditions
\[
|(u)_{,1}| \leq M/2,\quad Y_1(u) + M\left(\avint_Q |a|^m dz\right)^{\frac{1}{m}} + M\left(\avint_Q |b|^m dz\right)^{\frac{1}{m}} <\ee_*,
\]
the following holds: $\forall k=1,2,\ldots,$ we have 
\begin{equation*}
    |(u)_{,\theta^{k-1}}| \leq M,
\end{equation*}
\begin{equation*}
     Y_{\theta^{k-1}}(u) + \theta^{k-1}|(u)_{,\theta^{k-1}}|\left[\left(\avint_{Q(\theta^{k-1})} |a|^m dz\right)^{\frac{1}{m}} + \left(\avint_{Q(\theta^{k-1})} |b|^m dz\right)^{\frac{1}{m}}\right] <\ee_*,
\end{equation*}
\begin{equation*}
    Y_{\theta^k}(u) \leq \theta^{\beta}\left\{ Y_{\theta^{k-1}}(u) + \theta^{k-1}|(u)_{,\theta^{k-1}}|\left[\left(\avint_{Q(\theta^{k-1})} |a|^m dz\right)^{\frac{1}{m}} + \left(\avint_{Q(\theta^{k-1})} |b|^m dz\right)^{\frac{1}{m}}\right] \right\}.
\end{equation*}
\end{lemma}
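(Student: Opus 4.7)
The plan is to prove the three statements simultaneously by induction on $k$, each step being a scaled application of Lemma~\ref{L2.5}. For $k\geq 1$ introduce the rescaled triple
\[
\tilde u_k(y,s) := \theta^{k-1}u(\theta^{k-1}y,\theta^{2(k-1)}s), \quad \tilde a_k(y,s) := \theta^{k-1}a(\theta^{k-1}y,\theta^{2(k-1)}s),
\]
and similarly $\tilde b_k$. The scaling invariance of both \eqref{E2.1} and the suitability inequality \eqref{E2.2} makes $\tilde u_k$ a suitable weak solution in $Q$ with coefficients $\tilde a_k,\tilde b_k$. A direct change of variables yields
\[
(\tilde u_k)_{,1}=\theta^{k-1}(u)_{,\theta^{k-1}},\quad Y_1(\tilde u_k)=\theta^{k-1}Y_{\theta^{k-1}}(u),\quad \Bigl(\avint_Q|\tilde a_k|^m dz\Bigr)^{1/m}=\theta^{k-1}\Bigl(\avint_{Q(\theta^{k-1})}|a|^m dz\Bigr)^{1/m},
\]
with the obvious analogue for $\tilde b_k$. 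Since $m>5$, the non-averaged norms also obey $\|\tilde a_k\|_{L_m(Q)}=\theta^{(k-1)(1-5/m)}\|a\|_{L_m(Q(\theta^{k-1}))}\leq M$ and $\|\tilde b_k\|_{L_m(Q)}\leq c_0$, so Lemma~\ref{L2.5} is applicable to $\tilde u_k$ at every step.

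Abbreviate $y_k := Y_{\theta^{k-1}}(u)$, $S_k := (\avint_{Q(\theta^{k-1})}|a|^m dz)^{1/m} + (\avint_{Q(\theta^{k-1})}|b|^m dz)^{1/m}$, and $n_k := \theta^{k-1}|(u)_{,\theta^{k-1}}|S_k$. The base case $k=1$ is immediate: $|(u)_{,1}|\leq M/2\leq M$, $y_1+n_1\leq Y_1(u)+MS_1<\epsilon_*$, and Lemma~\ref{L2.5} applied to $u$ (legitimate because $\epsilon_*<\epsilon$) gives the third assertion in the form $y_2\leq C_1\theta^\alpha(y_1+n_1)\leq \theta^\beta(y_1+n_1)$. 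For the inductive step, assume the three statements hold at all indices $\leq k$. From (iii) and (ii) at step $k$ one has $y_{k+1}\leq \theta^\beta\epsilon_*$. The elementary bound $S_{k+1}\leq \theta^{-5k/m}S_1$ together with the still-to-be-proved inequality $|(u)_{,\theta^k}|\leq M$ and the hypothesis $MS_1<\epsilon_*$ yield
\[
n_{k+1}\leq M\theta^{k(1-5/m)}S_1\leq \theta^{k(1-5/m)}\epsilon_*,
\]
so $y_{k+1}+n_{k+1}\leq (\theta^\beta+\theta^{1-5/m})\epsilon_*<\epsilon_*$ provided $\theta<c_1(m)$ is chosen so that $\theta^\beta+\theta^{1-5/m}<1$; this is assertion (ii) at $k+1$.

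To obtain assertion (i) at $k+1$, iterate the recursion (iii) on smaller scales: for $j\geq 1$,
\[
y_{j+1}\leq \theta^{j\beta}y_1+\sum_{\ell=1}^{j}\theta^{(j+1-\ell)\beta}n_\ell \leq C(\theta,m)\epsilon_*\theta^{(j-1)\delta/2}, \quad \delta:=\min(\beta,1-5/m)>0,
\]
where the final bound absorbs any linear $j$-factor (arising in the borderline case $\beta=1-5/m$) into a halved exponent. Combined with the elementary estimate $|(u)_{,\theta^j}-(u)_{,\theta^{j-1}}|\leq \theta^{-5/3}Y_{\theta^{j-1}}(u)$ (coming from $|Q(\theta^{j-1})|/|Q(\theta^j)|=\theta^{-5}$, Jensen and the nesting $Q(\theta^j)\subset Q(\theta^{j-1})$), telescoping gives
\[
|(u)_{,\theta^k}|\leq |(u)_{,1}|+\theta^{-5/3}\sum_{j=1}^{k}y_j\leq \tfrac{M}{2}+\frac{C(\theta,m)\theta^{-5/3}}{1-\theta^{\delta/2}}\epsilon_*\leq M,
\]
once $\epsilon_*=\epsilon_*(\kappa,\theta,m,M)$ is taken sufficiently small. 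Finally, assertion (iii) at $k+1$ is the scaled conclusion of Lemma~\ref{L2.5} applied to $\tilde u_{k+1}$: its smallness hypothesis equals $\theta^k(y_{k+1}+n_{k+1})<\theta^k\epsilon_*\leq \epsilon$, the bound $|(\tilde u_{k+1})_{,1}|\leq \theta^k M\leq M$ holds, and the $L_m$-bounds on $\tilde a_{k+1},\tilde b_{k+1}$ have already been verified; scaling the estimate back and using $C_1\theta^{\alpha-\beta}\leq 1$ delivers the claim.

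The main technical obstacle is that the $L^m$ averages of $a,b$ on $Q(\theta^{k-1})$ may blow up like $\theta^{-5(k-1)/m}$; only the prefactor $\theta^{k-1}$ in $n_k$, combined with $m>5$ yielding the positive exponent $1-5/m$, allows $n_k$ to decay geometrically. The smallness $\theta<c_1(m)$ is then chosen to force $\theta^\beta+\theta^{1-5/m}<1$, and $\epsilon_*<\epsilon$ is chosen small enough to keep the telescoped mean $|(u)_{,\theta^k}|$ below $M$ uniformly in $k$, closing the bootstrap.
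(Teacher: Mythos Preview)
Your proof is correct and follows essentially the same induction-and-rescaling strategy as the paper's own argument: iterate the decay of $Y_{\theta^{j}}(u)$, telescope the differences $|(u)_{,\theta^{j}}-(u)_{,\theta^{j-1}}|\leq\theta^{-5/3}Y_{\theta^{j-1}}(u)$ to control $|(u)_{,\theta^{k}}|$, choose $c_1(m)$ so that $\theta^{\beta}+\theta^{1-5/m}<1$, take $\epsilon_*$ small enough to keep the mean below $M$, and rescale to re-apply Lemma~\ref{L2.5}. The only cosmetic difference is the order of presentation---you establish (ii) with a forward reference to (i), whereas the paper first proves the mean bound and then the smallness condition---but since your proof of (i) at level $k+1$ uses only the inductive hypothesis at levels $\leq k$, there is no circularity.
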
  

\begin{proof}
We prove the lemma by induction; the case $k=1$ is true thanks to Lemma \ref{L2.5}. Now, suppose the conclusion is true for $k\leq k_0$ ($k_0\geq 1$) and let us show that it remains the case for $k=k_0 + 1$.\\
For all $k\leq k_0$, we have
\begin{align*}
    Y_{\theta^k}(u) &\leq \theta^{\beta}\left\{ Y_{\theta^{k-1}}(u) + \theta^{k-1}|(u)_{,\theta^{k-1}}|\left[\left(\avint_{Q(\theta^{k-1})} |a|^m dz\right)^{\frac{1}{m}} + \left(\avint_{Q(\theta^{k-1})} |b|^m dz\right)^{\frac{1}{m}}\right] \right\}\\
    &\leq \theta^{\beta}\left\{ Y_{\theta^{k-1}}(u) + \theta^{(k-1)(1-5/m)}M\left[\left(\avint_{Q} |a|^m dz\right)^{\frac{1}{m}} + \left(\avint_{Q} |b|^m dz\right)^{\frac{1}{m}}\right] \right\}\\
    &\leq \theta^{\beta} Y_{\theta^{k-1}}(u) + \theta^{k \beta_1}\ee_*,
\end{align*}
with $\beta_1 = \min\{\beta,1-5/m\}$. By iterating the last inequality, we get
\[
Y_{\theta^k}(u) \leq \theta^{k\beta} Y_1(u) + k\theta^{k \beta_1}\ee_*,\quad\forall k\leq k_0.
\]
Thus,
\begin{align*}
    |(u)_{,\theta^{k_0}}| &\leq \sum_{k=1}^{k_0}|(u)_{,\theta^{k}} - (u)_{,\theta^{k-1}}| + |(u)_{,1}|\\
    &\leq \theta^{-5/3}\sum_{k=1}^{k_0}Y_{\theta^{k-1}}(u) + |(u)_{,1}|\\
    &\leq \theta^{-5/3}\sum_{k=1}^{k_0}\left(\theta^{(k-1)\beta} + (k-1)\theta^{(k-1)\beta_1} \right)\ee_* + M/2\\
    &\leq \underbrace{\theta^{-5/3}\left( (1-\theta^{\beta})^{-1} + \sum_{k=0}^{\infty}k\theta^{k\beta_1} \right)}_{C(\theta,m)}\ee_* + M/2.
\end{align*}
By choosing $\ee_* \leq M(2C(\theta,m))^{-1}$, we find that
\[ |(u)_{,\theta^{k_0}}| \leq M. \]
Moreover,
\begin{align*}
    Y_{\theta^{k_0}}(u) + \theta^{k_0}|(u)_{,\theta^{k_0}}|\left[\left(\avint_{Q(\theta^{k_0})} |a|^m dz\right)^{\frac{1}{m}} + \left(\avint_{Q(\theta^{k_0})} |b|^m dz\right)^{\frac{1}{m}}\right]&\leq \theta^{\beta}\ee_* + \theta^{(1-5/m)k_0}\ee_*\\
    &\leq \theta^{\beta}\ee_* + \theta^{(1-5/m)}\ee_*\\
    &<\ee_*,
\end{align*}
if we choose $\theta < c_1(m)$ to be small enough. Now, set
\begin{equation*}
    \begin{gathered}
    u^0(y,s) = \theta^{k_0}u(\theta^{k_0} y,\theta^{2 k_0} s),\\
    a^0(y,s) = \theta^{k_0}a(\theta^{k_0} y,\theta^{2 k_0} s),\quad b^0(y,s) = \theta^{k_0}b(\theta^{k_0} y,\theta^{2 k_0} s),
    \end{gathered}
\end{equation*}
with $(y,s)\in Q$. We show steadily that $u^0$ is also a suitable weak solution to \eqref{E2.1} with $a$ and $b$ replaced respectively by $a^0$ and $b^0$; moreover the conditions stated in Lemma \ref{L2.5} are satisfied for these new functions. Consequently, we have (thanks to Lemma \ref{L2.5})
\[
Y_{\theta}(u^0) \leq \theta^{\beta}\left[Y_1(u^0) + |(u^0)_{,1}|\left(\avint_Q |a^0|^m dz\right)^{\frac{1}{m}} + |(u^0)_{,1}|\left(\avint_Q |b^0|^m dz\right)^{\frac{1}{m}} \right],
\]
that is 
\[
Y_{\theta^{k_0 + 1}}(u) \leq \theta^{\beta}\left\{ Y_{\theta^{k_0}}(u) + \theta^{k_0}|(u)_{,\theta^{k_0}}|\left[\left(\avint_{Q(\theta^{k_0})} |a|^m dz\right)^{\frac{1}{m}} + \left(\avint_{Q(\theta^{k_0})} |b|^m dz\right)^{\frac{1}{m}}\right] \right\}.
\]
This concludes the induction and the proof of the lemma.
\end{proof}
By translation and dilatation, we obtain the following corollary.
\begin{corollary}\label{Cor2.6.1}
Let $u$ be a suitable weak solution to \eqref{E2.1} in $Q(z_0,R)$ with
\[
R^{1-5/m}\|a\|_{L_m(Q(z_0,R))}\leq M\quad\mbox{and}\quad R^{1-5/m}\|b\|_{L_m(Q(z_0,R))}\leq c_0
\]
with $c_0$ as in Lemma \ref{L2.5}. Take $\theta,\beta$ and $\ee_*$ as in the previous lemma. If we have in addition that
\[
R|(u)_{z_0,R}| \leq M/2 \mbox{ and } R Y(z_0,R;u) + R M\left[\left(\avint_{Q(z_0,R)} |a|^m dz\right)^{\frac{1}{m}} + \left(\avint_{Q(z_0,R)} |b|^m dz\right)^{\frac{1}{m}}\right] <\ee_*,
\]
then for all $k\geq 1$
\[
R|(u)_{Q(z_0,\theta^{k-1}R)}| \leq M,
\]
and
\begin{multline*}
    Y(z_0,\theta^k R;u) \leq \theta^{\beta}\left[ R Y(z_0,\theta^{k-1}R;u) + R\theta^{k-1}|(u)_{z_0,\theta^{k-1}R}|\left(\avint_{Q(z_0,\theta^{k-1}R)} |a|^m dz\right)^{\frac{1}{m}}\right.\\ + \left. R\theta^{k-1}|(u)_{z_0,\theta^{k-1}R}|\left(\avint_{Q(z_0,\theta^{k-1}R)} |b|^m dz\right)^{\frac{1}{m}}\right].
\end{multline*}
\end{corollary}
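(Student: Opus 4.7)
The plan is to reduce to Lemma~\ref{L2.6} via the parabolic translation--rescaling
\[
\tilde{u}(y,s) := R\, u(x_0 + Ry,\, t_0 + R^2 s),\quad \tilde{a}(y,s) := R\, a(x_0 + Ry,\, t_0 + R^2 s),\quad \tilde{b}(y,s) := R\, b(x_0 + Ry,\, t_0 + R^2 s),
\]
for $(y,s)\in Q$, which is the natural scaling of system~\eqref{Toy-Mod1}. The first step is to check that $\tilde{u}$ is itself a suitable weak solution of~\eqref{E2.1} on $Q$ with $a,b$ replaced by $\tilde{a},\tilde{b}$: every term of the PDE and of the local energy inequality~\eqref{E2.2} picks up the same factor of $R^3$ under this transformation (including the distributional term $u\cdot\dvg(a\otimes u)$), so the verification is direct.

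The second step is to transport the hypotheses by a change of variables. Routine computation gives the identities
\[
\|\tilde{a}\|_{L_m(Q)} = R^{1-5/m}\|a\|_{L_m(Q(z_0,R))},\quad (\tilde{u})_{,\rho} = R\,(u)_{z_0,\rho R},\quad Y_\rho(\tilde{u}) = R\cdot Y(z_0,\rho R;u),
\]
\[
\left(\avint_{Q(\rho)}|\tilde{a}|^m\,dz\right)^{1/m} = R\left(\avint_{Q(z_0,\rho R)}|a|^m\,dz\right)^{1/m},\qquad \rho\in(0,1],
\]
with analogous identities for $\tilde{b}$. Under these, the four assumptions of Corollary~\ref{Cor2.6.1} convert exactly into the four assumptions of Lemma~\ref{L2.6}, namely $\|\tilde{a}\|_{L_m(Q)}\leq M$, $\|\tilde{b}\|_{L_m(Q)}\leq c_0$, $|(\tilde{u})_{,1}|\leq M/2$, and the smallness below $\ee_*$ of $Y_1(\tilde{u}) + M\bigl[(\avint_Q|\tilde{a}|^m\,dz)^{1/m} + (\avint_Q|\tilde{b}|^m\,dz)^{1/m}\bigr]$.

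Applying Lemma~\ref{L2.6} to $\tilde{u}$ yields $|(\tilde{u})_{,\theta^{k-1}}|\leq M$ and the recursive decay of $Y_{\theta^k}(\tilde{u})$ for every $k\geq 1$, and reading these conclusions back through the same identities produces the claimed bounds on $R|(u)_{z_0,\theta^{k-1}R}|$ and on $Y(z_0,\theta^k R;u)$. There is no substantive obstacle: the argument is essentially a disciplined bookkeeping of powers of $R$. The only point worth a second look is the scaling covariance of the distributional identity used to interpret $u\cdot\dvg(a\otimes u)$ inside~\eqref{E2.2}, but this works out because $u$, $a$, and $b$ are all rescaled by the same factor $R$, so the two terms defining that distribution transform homogeneously.
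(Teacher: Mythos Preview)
Your proposal is correct and follows exactly the approach the paper indicates: the paper's own proof is the single phrase ``By translation and dilatation, we obtain the following corollary,'' and you have supplied precisely the details of that translation--dilatation argument. Your scaling identities and the verification that $\tilde u$ remains a suitable weak solution are accurate, and the transport of hypotheses and conclusions through these identities is exactly what is needed.
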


We are now ready to prove Theorem \ref{Thm2.1}.
\begin{proof}[Proof of Theorem \ref{Thm2.1}]
Let $M=2020$ and choose $\theta$ according to Lemma \ref{L2.6}. Define 
\[
A := \left(\avint_Q|u|^3 dz\right)^{\frac{1}{3}} + \left(\avint_Q|a|^m dz\right)^{\frac{1}{m}} + \left(\avint_Q|b|^m dz\right)^{\frac{1}{m}}.
\]
Observe that
\[ Q(z_0,1/4) \subset Q \mbox{ if } z_0\in Q(1/2), \]
\[ 
\frac{1}{4}|(u)_{z_0,1/4}| \leq c A,\quad \left(\frac{1}{4}\right)^{1-5/m}(\|a\|_{L_m(Q(z_0,1/4))}+\|b\|_{L_m(Q(z_0,1/4))}) \leq c(m)A,
\]
and 
\[
\frac{1}{4}Y(z_0,1/4;u) + \frac{1}{4}\times M \left[  \left(\avint_{Q(z_0,1/4)}|a|^m dz\right)^{\frac{1}{m}} + \left(\avint_{Q(z_0,1/4)}|b|^m dz\right)^{\frac{1}{m}} \right] \leq c(m) A;
\]
Now, we choose
\[
\ee_0 < \min\{ 2020/c, 2020/c(m), c_0/c(m), \ee_*/c(m) \}
\]
with $c_0$ and $\ee_*$ as in Lemma \ref{L2.6}. Consequently, by applying Corollary \ref{Cor2.6.1}, we are able to prove (using a similar iteration process we used in the proof of Lemma \ref{L2.6}) that
\[
Y(z_0,\theta^k/4;u) \leq C(\kappa,\theta,m,M)\theta^{k\beta_2},
\]
for all $z_0\in Q(1/2)$, $k\geq 1$ and with $\beta_2 = 1/2(\beta+\beta_1)$. H\"older continuity of $u$ in $Q(\overline{1/2})$ follows from Campanato's type condition. The theorem is proved.
\end{proof}
Theorem \ref{Thm2.1} can be strengthen in the following manner, by removing the smallness condition on $a$ and $b$.
\begin{theorem}[Improved $\ee$-regularity criterion]\label{Thm2.7}
Let $u$ be a suitable weak solution to \eqref{E2.1} in $Q$ with $\|a\|_{L_m(Q)} + \|b\|_{L_m(Q)} \leq M$ for some $M>0$ and $m>5$. Then there exists $\ee_1 = \ee_1(\kappa,m,M)>0$ with the following properties: if 
\begin{equation}
    \left(\avint_Q |u|^3 dz\right)^{\frac{1}{3}} \leq \ee_1,
\end{equation}
then $u$ is H\"older continuous in $\overline{Q(\frac{1}{2})}$ with exponent $\al = \al(m)\in(0,1)$ and 
\begin{equation}
    \|u\|_{C^{\al,\frac{\al}{2}}(\overline{Q(\frac{1}{2})})} \leq C(\ee_1,\kappa,m).
\end{equation}
\end{theorem}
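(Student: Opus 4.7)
The plan is to reduce the statement to Theorem \ref{Thm2.1} by a parabolic rescaling: the condition $m>5$ makes the localized averaged $L_m$-norms of $a,b$ gain a positive power of the radius, which cures the lack of smallness on the coefficients. For a generic $z_0=(x_0,t_0)\in\overline{Q(1/2)}$ and a scale $r\in(0,1/4]$ to be fixed below, I would introduce
\[
u_r(y,s) := r\,u(x_0+ry,\,t_0+r^2s),\quad a_r(y,s) := r\,a(x_0+ry,\,t_0+r^2s),\quad b_r(y,s) := r\,b(x_0+ry,\,t_0+r^2s),
\]
for $(y,s)\in Q$; since $z_0\in\overline{Q(1/2)}$ and $r\leq 1/4$ one has $Q(z_0,r)\subset Q$, and the scale-invariance of \eqref{E2.1}--\eqref{E2.2} makes $u_r$ a suitable weak solution to \eqref{E2.1} in $Q$ with $a,b$ replaced by $a_r,b_r$.

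A direct change of variables gives the identities
\[
\bigl(\avint_Q|u_r|^3 dz\bigr)^{1/3}=r\bigl(\avint_{Q(z_0,r)}|u|^3 dz\bigr)^{1/3},\qquad \bigl(\avint_Q|a_r|^m dz\bigr)^{1/m}=r\bigl(\avint_{Q(z_0,r)}|a|^m dz\bigr)^{1/m},
\]
and the analogous one for $b_r$, which combined with the crude bounds $\int_{Q(z_0,r)}(|a|^m+|b|^m)\leq M^m$ and $\int_{Q(z_0,r)}|u|^3\leq\ee_1^3|Q|$ yield
\[
\bigl(\avint_Q|a_r|^m\bigr)^{1/m}+\bigl(\avint_Q|b_r|^m\bigr)^{1/m}\leq C(m)\,M\,r^{\,1-5/m},\qquad \bigl(\avint_Q|u_r|^3\bigr)^{1/3}\leq C\,\ee_1\,r^{-2/3}.
\]
The positivity $1-5/m>0$ is exactly the gain afforded by the assumption $m>5$, and it dictates the order in which to make the smallness choices.

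Letting $\ee_0=\ee_0(\kappa,m)$ denote the threshold of Theorem \ref{Thm2.1}, I would pick first $r=r(\kappa,m,M)\in(0,1/4]$ so small that $C(m)Mr^{1-5/m}\leq 2\ee_0/3$, and only then $\ee_1=\ee_1(\kappa,m,M)$ so small that $C\ee_1 r^{-2/3}\leq \ee_0/3$; both choices are uniform in $z_0$. Theorem \ref{Thm2.1} applied to $u_r$ then yields $\|u_r\|_{C^{\al,\al/2}(\overline{Q(1/2)})}\leq C(\kappa,m,M)$, which unfolds into a Hölder bound for $u$ on $\overline{Q(z_0,r/2)}$ with constant $C(\kappa,m,M)\,r^{-1-\al}$, uniformly in $z_0$. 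I would finish by covering the compact set $\overline{Q(1/2)}$ by finitely many parabolic cylinders of radius $r/2$ and combining the local Hölder estimates for nearby pairs with the crude $L_\infty$ control (read off from the same estimates) for distant pairs to obtain \eqref{E2.4}. The only genuine bookkeeping step, and the main though minor obstacle, is verifying that the full suitable-weak-solution structure---in particular the local energy inequality \eqref{E2.2}, every term of which is homogeneous of degree $4$ under $(u,a,b)\mapsto(u_r,a_r,b_r)$---is preserved under the rescaling. No iteration analogous to the proof of Theorem \ref{Thm2.1} is needed.
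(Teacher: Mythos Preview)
Your proposal is correct and is precisely the standard rescaling argument the paper has in mind: the author omits the proof and refers to \cite{Jia14}, Theorem~2.2, whose proof proceeds exactly by shrinking to a radius $r$ so that the subcritical scaling $r^{1-5/m}$ forces the averaged $L_m$-norms of the coefficients below the threshold of the basic $\ee$-regularity criterion, and then covering $\overline{Q(1/2)}$. Your bookkeeping on the rescaled quantities and on the invariance of \eqref{E2.1}--\eqref{E2.2} is accurate.
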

We skip the proof of Theorem \ref{Thm2.7} since it is essentially a repetition of the proof of an analogous result for the incompressible Navier-Stokes system in \cite{Jia14} (see Theorem 2.2 in there).
\section{Local in space near initial time smoothness of Leray solutions}
In this section, we use the $\ee$-regularity theorem(s) proved in the previous section to study the local in space near the initial time smoothness of the Leray solutions (or energy solutions) to our model \eqref{Toy-Mod1}.\\
For our future analysis, the following a priori estimate will be needed. It was first proved in \cite{Lemar02} for the incompressible Navier-Stokes system; here we follow the proof given in \cite{Jia13} (see Lemma 2.2) which is much simpler than the original one.
\begin{lemma}[A priori estimate for Leray solutions]\label{L3.1}
Let $u_0 \in L_{2,loc}(\R^3)$ such that for some $R>0$, $\al(R) := \sup_{x_0 \in \R^3}\int_{B(x_0,R)}|u_0(x)|^2 dx < \infty$ and let $u$ be a Leray solution to system \eqref{Toy-Mod1} with initial data $u_0$. Then, there exists some small absolute number $\mu>0$ such that for $0< \lbd < \mu \min\{1,\al(R)^{-2}R^2,(1+\kappa)^{-1}\}$, we have 
\begin{multline*}
    \sup_{x_0\in \R^3} \essup_{0<t<\lbd R^2}\int_{B(x_0,R)}\frac{|u(x,t)|^2}{2}dx \\ + \sup_{x_0\in \R^3}\int_0^{\lbd R^2}\int_{B(x_0,R)}\left(|\nabla u(x,t)|^2 + \kappa (\dvg u(x,t))^2\right) dx dt \leq C_0 \al(R),
\end{multline*}
with $C_0$ an absolute large constant.
\end{lemma}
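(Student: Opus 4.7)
The strategy is the classical Lemarié--Rieusset / Jia a priori argument adapted to the Lamé dissipation, namely: (i) reduce to $R=1$ by the scaling symmetry $u(x,t)\mapsto R\,u(Rx,R^2 t)$, under which $\alpha_u(R)/R=\alpha_{u^R}(1)$, so the smallness condition $\lambda\le\mu\alpha(R)^{-2}R^2$ transforms into the scale-invariant condition $\lambda\le\mu\alpha(1)^{-2}$; (ii) plug a spatial cutoff into the local energy inequality in Definition \ref{Def1.1}; (iii) close a nonlinear Grönwall-type inequality for the local energy via Gagliardo--Nirenberg; (iv) run a continuity argument in $\tau$.

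\emph{Setting up the inequality.} Work with $R=1$ and write $\alpha=\alpha(1)$. For each $x_0\in\R^3$ pick $\psi_{x_0}\in C^\infty_0(B(x_0,2))$ with $\psi_{x_0}\equiv 1$ on $B(x_0,1)$, $0\le\psi_{x_0}\le 1$, $|\nabla\psi_{x_0}|^2/\psi_{x_0}+|\Delta\psi_{x_0}|\le C$. Test the local energy inequality against $\psi_{x_0}(x)\chi_\epsilon(s)\eta_\delta(s;\tau)$, where $\chi_\epsilon$ is a mollified indicator of $(0,\infty)$ and $\eta_\delta(\cdot;\tau)$ a mollified indicator of $(-\infty,\tau)$. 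Sending $\epsilon,\delta\to 0$ and using the strong $L^2_{\rm loc}$ convergence $u(\cdot,t)\to u_0$ as $t\to 0^+$ gives, for a.e.\ $\tau$,
\begin{multline*}
\int\psi_{x_0}\frac{|u(\tau)|^2}{2}\,dx+\int_0^\tau\!\!\int\psi_{x_0}\bigl(|\nabla u|^2+\kappa(\dvg u)^2\bigr)dz
\le \int\psi_{x_0}\frac{|u_0|^2}{2}\,dx\\
+\int_0^\tau\!\!\int\frac{|u|^2}{2}\Delta\psi_{x_0}\,dz
+\int_0^\tau\!\!\int\Bigl(\frac{|u|^2}{2}-\kappa\dvg u\Bigr)u\cdot\nabla\psi_{x_0}\,dz.
\end{multline*}
Introduce $\bar E(\tau):=\sup_{x_0}\mathop{\rm ess\,sup}_{0<s<\tau}\int_{B(x_0,1)}|u|^2/2$ and $\bar D(\tau):=\sup_{x_0}\int_0^\tau\!\!\int_{B(x_0,1)}(|\nabla u|^2+\kappa(\dvg u)^2)$.

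\emph{Estimating the right-hand side.} The initial term is bounded by $C\alpha$ by covering $B(x_0,2)$ with finitely many unit balls. The $\Delta\psi_{x_0}$ term contributes at most $C\tau\bar E(\tau)$. For the divergence piece, Cauchy--Schwarz and the choice $|\nabla\psi_{x_0}|^2/\psi_{x_0}\le C$ give
\[
\Bigl|\kappa\int_0^\tau\!\!\int\dvg u\;u\cdot\nabla\psi_{x_0}\,dz\Bigr|
\le \tfrac12\kappa\int_0^\tau\!\!\int\psi_{x_0}(\dvg u)^2+C\kappa\tau\bar E(\tau),
\]
the first piece absorbed into the left-hand side. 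For the cubic term, the Gagliardo--Nirenberg / Sobolev inequality on a ball,
\[
\|u\|_{L^3(B(x_0,2))}^3\le C\|u\|_{L^2(B(x_0,2))}^{3/2}\bigl(\|\nabla u\|_{L^2(B(x_0,2))}+\|u\|_{L^2(B(x_0,2))}\bigr)^{3/2},
\]
followed by Hölder in time and the covering argument yields
\[
\int_0^\tau\|u\|_{L^3(B(x_0,2))}^{3}\,ds
\le C\bar E(\tau)^{3/4}\tau^{1/4}\bigl(\bar D(\tau)+\tau\bar E(\tau)\bigr)^{3/4}.
\]
Taking the supremum in $x_0$ on the left and setting $Z(\tau):=\bar E(\tau)+\bar D(\tau)$, one obtains
\[
Z(\tau)\le C_1\alpha+C_2(1+\kappa)\tau\,Z(\tau)+C_3\tau^{1/4}(1+\tau)^{3/4}Z(\tau)^{3/2}.
\]

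\emph{Closing via continuity.} Choose $\mu$ so small that $C_2\mu\le\tfrac14$; then under $\tau\le\lambda\le\mu(1+\kappa)^{-1}\wedge\mu$ the middle term is absorbed, leaving
$\tfrac12 Z(\tau)\le C_1\alpha+C_3'\tau^{1/4}Z(\tau)^{3/2}$. Because $\tau\mapsto Z(\tau)$ is non-decreasing and right-continuous with $Z(0^+)\le C_1\alpha$, a standard bootstrap shows that as long as $Z(\tau)\le 8C_1\alpha$, the inequality is consistent; the set $\{\tau:Z(\tau)\le 8C_1\alpha\}$ is open-closed in $[0,\lambda]$ provided $C_3'\lambda^{1/4}(8C_1\alpha)^{1/2}\le \tfrac{1}{4}C_1$, i.e.\ $\lambda\le\mu\alpha^{-2}$ for a suitable $\mu$. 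Combining the three smallness requirements gives exactly $\lambda\le\mu\min\{1,\alpha^{-2},(1+\kappa)^{-1}\}$ and $Z(\tau)\le C_0\alpha$; undoing the scaling restores the statement.

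\emph{Main obstacle.} The only genuine subtlety is the Lamé correction: one must verify that $\kappa(\dvg u)^2$ on the left can absorb the extra flux $\kappa\,\dvg u\,u\cdot\nabla\psi$ produced by the non-divergence-free structure, which forces the explicit $(1+\kappa)^{-1}$ factor in the allowed time $\lambda$. Once this is handled by Young's inequality as above, the remainder is structurally identical to the Navier--Stokes argument in \cite{Jia13}.
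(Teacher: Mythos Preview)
Your proposal is correct and follows essentially the same approach as the paper's proof: test the local energy inequality against a spatial cutoff, absorb the Lamé flux term $\kappa\,\dvg u\,u\cdot\nabla\psi$ via Young's inequality (producing the $(1+\kappa)^{-1}$ constraint), control the cubic term by the multiplicative/Gagliardo--Nirenberg inequality to get a $\lambda^{1/4}A_R(\lambda)^{3/2}$ contribution, and close by a continuation argument. The only cosmetic difference is that you normalise to $R=1$ by scaling at the outset, whereas the paper carries $R$ throughout; your treatment of how the initial-data term appears (via the temporal mollifiers $\chi_\epsilon,\eta_\delta$ and the $L^2_{\mathrm{loc}}$ convergence $u(\cdot,t)\to u_0$) is in fact spelled out more carefully than in the paper.
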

\begin{proof}
Let $0\leq \phi\in C^{\infty}_0(B)$ such that $\phi \equiv 1$ in $B(1/2)$ and $\phi\equiv 0 \in B\setminus B(3/4)$; let $x_0 \in \R^3$, $R>0$ and set $\phi_{x_0,2R}(x) = \phi((x-x_0)/2 R)$. We have, from the local energy inequality verified by $u$, that: 
\begin{multline*}
    \int_{B(x_0,2R)}\frac{|u(x,t)|^2}{2}\phi_{x_0,2R}(x)dx \\+ \int_0^t\int_{B(x_0,2R)}\left(|\nabla u|^2 + \kappa(\dvg u)^2 \right)\phi_{x_0,2R} dx ds \leq \int_{B(x_0,2R)}\frac{|u_0(x)|^2}{2}\phi_{x_0,2R}(x) dx\\ + \int_0^t \int_{B(x_0,2R)}\frac{|u|^2}{2}\DD \phi_{x_0,2R} dx ds + \int_0^t \int_{B(x_0,2R)}(\frac{|u|^2}{2} - \kappa \dvg u)u\cdot \nabla\phi_{x_0,2R} dx ds.
\end{multline*}
Now, set
\begin{multline*}
    A_R(\lbd) = \sup_{x_0\in \R^3} \essup_{0<t<\lbd R^2}\int_{B(x_0,R)}\frac{|u(x,t)|^2}{2}dx \\ + \sup_{x_0\in \R^3}\int_0^{\lbd R^2}\int_{B(x_0,R)}\left(|\nabla u(x,t)|^2 + \kappa (\dvg u(x,t))^2\right) dx dt\quad (\lbd >0);
\end{multline*}
For a.e. $t\in (0,\lbd R^2)$, we have 
\begin{multline*}
    \int_{B(x_0,R)}\frac{|u(x,t)|^2}{2}dx + \int_0^t\int_{B(x_0,R)}|\nabla u|^2dx ds\\ + \int_0^t\int_{B(x_0,2R)}\kappa(\dvg u)^2\phi_{x_0,2R} dx ds \leq C\left( \al(R) + \lbd A_R(\lbd) 
    +\frac{1}{R}\int_0^{\lbd R^2}\int_{B(x_0,2R)}|u|^3 dx ds\right.\\ \left. +
     \int_0^{\lbd R^2}\int_{B(x_0,2R)}\kappa|\dvg u|\phi_{x_0,2R}^{\frac{1}{2}}|u||\nabla \phi_{x_0,2R}^{\frac{1}{2}}| dx ds \right).
\end{multline*}
Next, by known multiplicative inequality, we have
\[
\int_0^{\lbd R^2}\int_{B(x_0,2R)}|u|^3 dx ds \leq C\lbd^{\frac{1}{4}}R^{\frac{1}{2}}A_R(\lbd)^{\frac{3}{2}}\quad (\mbox{if }\lbd \leq 1),
\]
and thanks to Young's inequality that
\begin{multline*}
    \int_0^{\lbd R^2}\int_{B(x_0,2R)}\kappa|\dvg u|\phi_{x_0,2R}^{\frac{1}{2}}|u||\nabla \phi_{x_0,2R}^{\frac{1}{2}}|dx ds \leq \frac{1}{2} \int_0^t\int_{B(x_0,2R)}\kappa(\dvg u)^2\phi_{x_0,2R} dx ds\\ + C \kappa \lbd A_R(\lbd).
\end{multline*}
Consequently, we get that
\begin{multline*}
    \int_{B(x_0,R)}\frac{|u(x,t)|^2}{2}dx + \int_0^t\int_{B(x_0,R)}|\nabla u|^2dx ds\\ + \frac{1}{2}\int_0^t\int_{B(x_0,2R)}\kappa(\dvg u)^2\phi_{x_0,2R} dx ds \leq C \left[ \al(R) + (1+\kappa)\lbd A_R(\lbd) + \lbd^{\frac{1}{4}}R^{-\frac{1}{2}}A_R(\lbd)^{\frac{3}{2}}\right],
\end{multline*}
for a.e. $t\in (0,\lbd R^2)$ and all $x_0\in R^3$. Therefore
\[
A_R(\lbd) \leq C \left[ \al(R) + (1+\kappa)\lbd A_R(\lbd) + \lbd^{\frac{1}{4}}R^{-\frac{1}{2}}A_R(\lbd)^{\frac{3}{2}}\right];
\]
By choosing $\lbd \leq \min\{ 1,(2C(1+\kappa))^{-1} \}$, we find that
\[
A_R(\lbd) \leq 2 C \left( \al(R) + \lbd^{\frac{1}{4}}R^{-\frac{1}{2}}A_R(\lbd)^{\frac{3}{2}}\right),
\]
and from there the conclusion follows by standard continuation arguments. 
\end{proof}

Now we can prove the first important result of this section.
\begin{theorem}\label{Thm3.2}
Let $u_0 \in L_{2,loc}(\R^3)$ such that $\al:= \sup_{x_0 \in \R^3}\int_B |u_0(x)|^2 dx < \infty$. Suppose in addition that $M:=\|u_0\|_{L_m(B)}<\infty$ with $m>3$; Let us decompose $u_0 = u^1_0 + u^2_0$ with $u_0^1|_{B(4/3)} = u_0$, supp$u_0^1\subset\subset B(2)$ and $\|u_0^1\|_{L_m(\R^3)}\leq M$. Now, let $a$ be the locally in time defined mild solution to system \eqref{Toy-Mod1} with initial data $u_0^1$. Then, there exists a time $T=T(\al,\kappa,m,M)>0$ such that any Leray solution $u$ to \eqref{Toy-Mod1} satisfies:
\[
\|u-a\|_{C^{\gamma,\frac{\gamma}{2}}(\overline{B(1/2)\times [0,T]})} \leq C(\al,\kappa,m,M),
\]
for some $\gamma = \gamma(m)\in (0,1)$.
\end{theorem}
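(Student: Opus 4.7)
The plan is to write $u = a + v$ where $a$ is the mild solution of \eqref{Toy-Mod1} with the truncated initial data $u_0^1$, then to read $v := u - a$ as a suitable weak solution of the generalised system \eqref{E2.1} with both auxiliary fields equal to $a$, and finally to apply the improved $\ee$-regularity criterion Theorem \ref{Thm2.7} on a suitable covering of $\overline{B(1/2)}\times [0,T]$.

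First I would construct $a$ by a standard Duhamel fixed-point argument based on the Lam\'e semigroup $S_{\kappa}(t)$. Since $u_0^1 \in L_m(\R^3)$ with $m>3$ and is compactly supported in $B(2)$, the subcritical smoothing $\|S_{\kappa}(t)f\|_{L_p}\leq C t^{-\frac{3}{2}(1/m - 1/p)}\|f\|_{L_m}$ yields, for some $T_0 = T_0(\kappa,m,M)>0$, existence of $a$ on $[0,T_0]$ together with
\[
\|a\|_{L_\infty(0,T_0;L_m(\R^3))} + \|a\|_{L_{\tilde m}(\R^3\times(0,T_0))} \leq C(\kappa,m,M),
\]
for some $\tilde m \in (5, 5m/3)$; the assumption $m>3$ is precisely what guarantees $\tilde m>5$. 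Parabolic bootstrap gives in addition that $a$ is smooth for $t>0$.

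A direct subtraction shows that $v=u-a$ solves
\[
\partial_t v - \DD v - \kappa\nabla\dvg v + v\cdot \nabla v + \tfrac{v}{2}\dvg v + a\cdot \nabla v + v\cdot \nabla a + \tfrac{a}{2}\dvg v + \tfrac{v}{2}\dvg a = 0,
\]
which is precisely \eqref{E2.1} with both auxiliary coefficients set equal to $a$, once one uses $\dvg(v\otimes a/2) = \tfrac{1}{2}(a\cdot \nabla v + v\dvg a)$ and $\dvg(a\otimes v) = a\dvg v + v\cdot \nabla a$. The suitability inequality \eqref{E2.2} for $v$ is then obtained by combining the local energy inequality satisfied by $u$, the smooth pointwise energy identity satisfied by $a$, and the mixed identity $(\partial_t-\DD)(v\cdot a)$ computed from the two PDEs; a careful but routine collection of terms shows that the dissipation $|\nabla v|^2 + \kappa(\dvg v)^2$ survives and that the remaining flux terms assemble into exactly those appearing in \eqref{E2.2}.

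It remains to verify the scale-invariant smallness hypothesis of Theorem \ref{Thm2.7} on a cylinder of fixed radius around each $z_0 \in \overline{B(1/2)}\times[0,T]$. Lemma \ref{L3.1} provides the uniform Leray energy bound on $u$, which combined with the standard multiplicative inequality gives a uniform $L_{10/3}(B\times(0,T_0))$ bound on $u$; together with the $L_{\tilde m}\subset L_{10/3}$ bound on $a$ we obtain $\|v\|_{L_{10/3}(B\times(0,T_0))}\leq C(\al,\kappa,m,M)$. The crucial additional input is that $u_0 - u_0^1 \equiv 0$ on $B(4/3)$, so the $L_2$-continuity of $u$ and $a$ at $t=0$ yields $\|v(\cdot,t)\|_{L_2(B(1))}\to 0$ as $t\to 0^+$; interpolating with the $L_{10/3}$ bound via $\|v\|_{L_3}^3\leq \|v\|_{L_2}^{1/2}\|v\|_{L_{10/3}}^{5/2}$ then makes $(\avint_{Q(z_0,r_0)}|v|^3)^{1/3} < \ee_1$ for a suitable $r_0=r_0(\al,\kappa,m,M)$ and $T=T(\al,\kappa,m,M)$. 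Theorem \ref{Thm2.7} and a covering argument then yield the conclusion. The main obstacle I expect is the uniform quantitative control of the $L_2$-continuity at $t=0$: Definition \ref{Def1.1} supplies only qualitative continuity, so one must extract an explicit rate out of the local energy inequality (by testing against suitable cut-offs based at $(x_0,0)$) in order for the final $T$ to depend only on $\al,\kappa,m,M$ and not on the particular Leray solution $u$.
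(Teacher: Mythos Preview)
Your proposal is correct and follows essentially the same route as the paper. The only differences are cosmetic: the paper resolves precisely the obstacle you flag by testing the local energy inequality for $v$ against a cut-off supported in $B(4/3)$ (where $v(\cdot,0)=0$) to obtain the explicit rate $\int_B|v(x,t)|^2\,dx\leq C(\al,\kappa,m,M)\,t^{(m-3)/5m}$, and then---rather than using a covering argument---extends $v$ by zero for $t<0$ and applies Theorem~\ref{Thm2.7} on the single cylinder $B\times(-1+t_0,t_0)$.
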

\begin{proof}
Let us start by discussing the decomposition of $u_0$ in the statement of the theorem; introduce the cut-off function $0\leq \varphi \in C^{\infty}_0(B)$ such that $\varphi \equiv 1$ in $B(4/3)$ and $\varphi\equiv 0$ in $B(2)\setminus B(3/2)$. We have the required splitting if we set $u_0^1 := u_0\varphi$ and $u_0^2 := u_0(1-\varphi)$.\\
Next, by assumption $a$ solves the Cauchy problem for system \eqref{Toy-Mod1} with initial data $u_0^1$ in $\R^3 \times [0,T_1]$, where $T_1 = T_1(\kappa,m,M)>0$. Such construction can be done by mimicking the one done for the incompressible Navier-Stokes equations in the Appendix of \cite{Esc03} (see Theorem 7.4). We present the details elsewhere. Their arguments also allow us to get that 
\begin{equation}
    \begin{gathered}
    \|a\|_{C([0,T_1];L_2(\R^3))} + \|\nabla a\|_{L_2(\R^3\times(0,T_1))} \leq C(m,M),\\
    \|a\|_{L_{\infty}(0,T_1;L_m(\R^3))} + \|a\|_{L_{\frac{5 m}{3}}(\R^3\times(0,T_1))} \leq C(\kappa,m,M).
    \end{gathered}
\end{equation}
Now, we set $v:= u - a$ and we observe that
\[
\partial_t v -\DD v - \kappa\nabla\dvg v + v\cdot \nabla v + \frac{v}{2}\dvg v + \frac{a}{2}\cdot \nabla v + \dvg(v\otimes \frac{a}{2}) + \dvg(a\otimes v) - \frac{a}{2}\dvg v = 0,  
\]
in the sense of distributions in $\R^3\times (0,T_1)$. Moreover, because $u$ and $a$ satisfy a local energy inequality, we see that
\begin{multline}\label{E3.2}
    \partial_t \frac{|v|^2}{2} - \DD \frac{|v|^2}{2} - \kappa\dvg(v\dvg v) + |\nabla v|^2 + \kappa (\dvg v)^2 + \dvg\left((v+a)\frac{|v|^2}{2}\right)\\ + v\cdot\dvg(a\otimes v) - \frac{1}{2}v\cdot a \dvg v \leq 0 \quad\mbox{in }\mathcal{D}'(\R^3\times (0,T_1)).
\end{multline}
In other words, $v$ is a suitable weak solution to \eqref{E2.1} with $b=a$. Note also that $\lim_{t\to 0^+}\|v(\cdot,t)- u^2_0\|_{L_2(B(x_0,1))}\to 0$ for all $x_0\in \R^3$, $u^2_0|_{B(4/3) = 0}$ (thus we have as a byproduct $\lim_{t\to 0^+}\|v(\cdot,t)\|_{L_2(B(4/3))} = 0$) and from Lemma \ref{L3.1}, there exists $0<T_2 = T_2(\al,\kappa,m,M)<T_1$ such that
\begin{equation}\label{E3.3}
    \essup_{0<t<T_2}\frac{1}{2}\int_{B(2)}|v(x,t)|^2 dx + \int_0^{T_2}\int_{B(2)}|\nabla v(x,t)|^2 dx dt \leq C(\al,m,M).
\end{equation}
From the local energy \eqref{E3.2} for $v$ and the fact that $\lim_{t\to 0^+}\|v(\cdot,t)\|_{L_2(B(4/3))} = 0$, we obtain
\begin{multline}
    \frac{1}{2}\int_{B(4/3)}|v(x,t)|^2 \phi(x) dx + \int_0^t \int_{B(4/3)}|\nabla v|^2 \phi dz + \int_0^t\int_{B(4/3)}\kappa(\dvg v)^2\phi dz\\ \leq \int_0^t \int_{B(4/3)}\frac{|v|^2}{2}\DD \phi dz + \int_0^t \int_{B(4/3)}\frac{|v|^2}{2}(v + a)\cdot \nabla \phi dz\\ + \int_0^t \int_{B(4/3)}a\otimes v:\nabla v \phi dz + \int_0^t \int_{B(4/3)} a\otimes v:v\otimes \nabla \phi dz\\ - \kappa\int_0^t \int_{B(4/3)}v\cdot\nabla \phi \dvg v dz + \frac{1}{2}\int_0^t \int_{B(4/3)}\phi v\cdot a \dvg v dz,     
\end{multline}
for a.e. $t\in (0,T_2)$ and $0\leq \phi \in C^{\infty}_0(B(4/3))$ such $\phi \equiv 1$ in $B$. The previous estimate and a repetitive use of H\"older inequality (with estimate \eqref{E3.3} at hand) yield:
\begin{multline}\label{E3.5}
    \int_B\frac{|v(x,t)|^2}{2}dx + \int_0^t \int_B |\nabla v|^2 dz + \int_0^t \int_B \kappa (\dvg v)^2 dz\\
    \leq C(\al,\kappa,m,M)(t + t^{\frac{1}{10}} + t^{\frac{2m-3}{5 m}} + t^{\frac{m-3}{5m}}),
\end{multline}
for a.e. $t\in (0,T_2)$.\\
Now, fix $t_0\in (0,T_2)$ to be specified later. Then extend $v$ to $B\times (-1 + t_0,t_0)$ by setting $v \equiv 0$ in $B\times (-1 + t_0,0)$. Extend also $a$ to $B\times (-1 + t_0,t_0)$ by setting $a\equiv 0$ for $t<0$. Clearly the extended function $v$ is a suitable weak solution to \eqref{E2.1}, with the extended $a$, in $B\times (-1 + t_0,t_0)$. Indeed, the fact that $\lim_{t\to 0^+}\|v(\cdot,t)\|_{L_2(B)} = 0$ insure that $\partial_t v$ and $\partial_t \frac{|v|^2}{2}$ will not cause any problem across $\{t=0\}$. Finally, because of \eqref{E3.5}, if we choose $t_0 = t_0(\al,\kappa,m,M)<T_2$ sufficiently small, we can apply Theorem \ref{Thm2.7} and conclude that $v$ is H\"older continuous in $B(1/2)\times [0,t_0]$, for some $\gamma = \gamma(m)\in (0,1)$. This concludes the proof of the theorem.  
\end{proof}
Theorem \ref{Thm3.2} allows us to prove the following.
\begin{theorem}[Local H\"older regularity of Leray solutions]\label{Thm3.3}
Let $u_0 \in L_{2,loc}(\R^3)$ such that $\al:= \sup_{x_0 \in \R^3}\int_B |u_0(x)|^2 dx < \infty$. Suppose in addition that $M:= \|u_0\|_{C^{\gamma,\frac{\gamma}{2}}(B(2))}<\infty$. Then, there exists $T=T(\al,\gamma,\kappa,M)>0$ such that any Leray weak solution $u$ to \eqref{Toy-Mod1} satisfies:
\[
\|u\|_{C^{\gamma,\frac{\gamma}{2}}(\overline{B(1/4)}\times [0,T])} \leq C(\al,\gamma,\kappa,M).
\]
\end{theorem}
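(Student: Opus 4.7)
The plan is to reuse the decomposition of Theorem \ref{Thm3.2} and exploit the additional Hölder regularity of $u_0$ to upgrade the mild solution $a$ to a Hölder continuous function, then reconstruct $u = (u-a) + a$. First, fix $\varphi \in C^{\infty}_0(B(2))$ with $\varphi \equiv 1$ on $B(4/3)$ and set $u_0^1 := \varphi u_0$, $u_0^2 := (1-\varphi) u_0$. Under the hypothesis $u_0 \in C^{\gamma}(\overline{B(2)})$ with norm $\leq M$, $u_0^1$ extends by zero to a compactly supported function in $C^{\gamma}(\R^3)$ with $\|u_0^1\|_{C^\gamma(\R^3)} \leq C(\gamma, M)$; in particular $u_0^1 \in L_m(\R^3)$ for every $m \in (3, \infty)$. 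Theorem \ref{Thm3.2} then furnishes a local-in-time mild solution $a$ on $\R^3 \times [0, T_1]$, a time $T_2 < T_1$ depending on $\alpha, \kappa, \gamma, M$, and an exponent $\gamma_1 \in (0,1)$ such that
\[
v := u - a \in C^{\gamma_1, \gamma_1/2}(\overline{B(1/2)} \times [0, T_2])
\]
with a quantitative norm bound in terms of $\alpha,\kappa,\gamma,M$.

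The main task is then to show $a \in C^{\gamma_2, \gamma_2/2}(\R^3 \times [0, T_3])$ for some $\gamma_2 \in (0,1)$ and $T_3 \leq T_2$. Starting from the Duhamel representation
\[
a(t) = S_\kappa(t) u_0^1 + \int_0^t S_\kappa(t-s)\bigl[\dvg(a \otimes a) - \tfrac{a}{2}\dvg a\bigr](s)\, ds,
\]
I would treat the two contributions separately. For the semigroup acting on the initial data I would use the decomposition of the Lamé semigroup into a $(1+\kappa)$-heat semigroup acting on the divergence part and a heat semigroup acting on the curl part (exactly the device employed in the proof of Lemma \ref{L2.2}) and deduce from standard heat-kernel estimates that $S_\kappa(\cdot)u_0^1 \in C^{\gamma, \gamma/2}(\R^3 \times [0, T_1])$ with norm controlled by $C(\kappa)\|u_0^1\|_{C^\gamma}$. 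For the Duhamel integral I would combine the a priori bounds on $a$ from Theorem \ref{Thm3.2}, namely $a \in L_\infty(0, T_1; L_m) \cap L_{5m/3}(\R^3 \times (0, T_1))$ together with the corresponding bound on $\nabla a$ provided by the maximal regularity part of Lemma \ref{L2.2}, to place the nonlinearity in some $L_{s,l}(\R^3 \times (0, T_3))$ with $s, l$ satisfying $1 - 2/l - 3/s > 0$, and then invoke directly the Hölder-continuity statement for the function $w$ in Lemma \ref{L2.2}. Choosing $m$ sufficiently large is allowed because $u_0^1$ is bounded with compact support, and it makes the condition $1 - 2/l - 3/s > 0$ easy to satisfy.

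Adding, $u = v + a$ is Hölder continuous on $\overline{B(1/2)} \times [0, T]$ for $T := \min(T_2, T_3)$, with norm controlled by $C(\alpha, \kappa, \gamma, M)$, which a fortiori gives the result on the smaller cylinder $\overline{B(1/4)} \times [0, T]$. The main obstacle in my view is the Hölder regularity of the Lamé mild solution $a$, since the paper does not explicitly develop Schauder-type estimates for the Lamé system; the decomposition trick from the proof of Lemma \ref{L2.2} is what renders this step accessible via already-proved tools. A secondary minor point is that the Hölder exponent produced by Theorem \ref{Thm3.2} is $\gamma_1 = \gamma_1(m)$ rather than the input exponent $\gamma$, and one matches them either by reinterpreting the $\gamma$ in the conclusion as the final exponent produced by the argument or by a single additional bootstrap step once the preliminary Hölder regularity of $u$ is in hand.
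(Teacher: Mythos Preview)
Your proposal is correct and follows essentially the same route as the paper's own sketch: the same cutoff decomposition $u_0 = u_0^1 + u_0^2$, Theorem~\ref{Thm3.2} for the H\"older continuity of $v = u - a$, the Lam\'e-semigroup decomposition trick from Lemma~\ref{L2.2} to obtain $a \in C^{\gamma,\gamma/2}$, and a bootstrap via Lemma~\ref{L2.2} to upgrade the preliminary exponent to $\gamma$. Your identification of the H\"older regularity of $a$ as the main step, and of the exponent mismatch requiring a final bootstrap, matches the paper's reasoning exactly.
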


\begin{proof}[Sketch of proof]
With the same notation as in Theorem \ref{Thm3.2}, we have that supp$u^1_0 \subset\subset B(2)$ and $\|u_0^1\|_{C^{\gamma}(\R^3)} \leq CM$. Consequently, $u-a$ is H\"older continuous with some exponent $\beta \in (0,\gamma)$ in $\overline{B(1/2)}\times [0,T_1]$ where $T_1 = T_1(\al,\kappa,M)>0$. Since the initial data $u_0^1$ for $a$ is in $C^{\gamma}(\R^3)$, it is not difficult to show that $a \in C^{\gamma,\frac{\gamma}{2}}(\R^3\times [0,T_1])$. Therefore, $u$ is H\"older continuous with exponent $\beta$ in $\overline{B(1/2)}\times [0,T_1]$. From this point, a standard bootstrap argument with repetitive use of Lemma \ref{L2.2} yields the required H\"older continuity of $u$; and a careful track of the constants gives us the estimate in the theorem. This concludes the proof.
\end{proof}
\section{Proof of Theorem \ref{Thm1.1}}
We are ready now to proof the main result of this paper.
\begin{proof}[Proof of Theorem \ref{Thm1.1}]
From Lemma \ref{L3.1}, we have that 
\begin{equation}\label{E4.1}
    \sup_{0<t<T_1} \frac{1}{2}\int_B |u(x,t)|^2 dx + \int_0^{T_1}\int_B |\nabla u(x,t)|^2 dx dt \leq C(\kappa, \|u_0\|_{C(\partial B)}),
\end{equation}
with $T_1= T_1(\kappa,\|u_0\|_{C(\partial B)})$.\\
Since $u$ is scale invariant i.e. $\lbd u(\lbd x, \lbd^2 t) = u(x,t)$, we have that
\[
u(x,t) = \frac{1}{\sqrt{t}}U\left( \frac{x}{\sqrt{t}} \right),\quad t>0,
\]
where $U(\cdot) := u(\cdot,1)$. Thus, we deduce from \eqref{E4.1} that
\begin{equation}\label{E4.2}
    \sqrt{t^*}\int_{B(1/\sqrt{t^*})} |U(y)|^2 dy + \sqrt{t^*}\int_{B(1/\sqrt{t^*})} |\nabla U(y)|^2 dy \leq C(\kappa, \|u_0\|_{C(\partial B)}),
\end{equation}
for all $t^*\in (0,T_1)$.\\
On the other hand, for all $|x_0| = 8$, we have $u_0 \in C^{\infty}(B(x_0,4))$. Therefore, by Theorem \ref{Thm3.3} (and some simple bootstrapping arguments), we have that there exists $T_2 = T_2(\kappa,u_0)>0$ such that 
\begin{equation}\label{E4.3}
    \|\partial_t \partial^{\al} u\|_{L_{\infty}(\overline{B(x_0,1/8)}\times [0,T_2])} \leq C(\al,\kappa,u_0),
\end{equation}
for all Leray solution $u$ to system \eqref{Toy-Mod1} with $u_0$ initial data.\\
Since for all $\lbd>0$, $u^{\lbd}:(x,t)\mapsto \lbd u(\lbd x,\lbd^2 t)$ is also a Leray solution to \eqref{Toy-Mod1} with initial data $u_0$, then \eqref{E4.3} holds also for $u^{\lbd}$ and we obtain that
\[
|\lbd^{1+|\al|}\partial^{\al} u(\lbd x_0,\lbd^2 t) - \partial^{\al}u_0(x_0)| \leq C(\al,\kappa,u_0)t.
\]
Setting $y=x_0/\sqrt{t}$, and by using the homogeneity of $\partial^{\al} u_0$, we get that:
\begin{equation}
    |\partial^{\al}(U - u_0)(y)| \leq \frac{C(\al,\kappa,u_0)}{|y|^{3+|\al|}},\quad \forall |y|> \frac{8}{\sqrt{T_2}}.
\end{equation}
Now, we choose $t^* = t^*(\kappa,u_0)$ in \eqref{E4.2} sufficiently small so that
\begin{equation}\label{E4.5}
    \int_{B(\frac{16}{\sqrt{T_2}})}\left(|U(y)|^2 + |\nabla U(y)|^2 dy \right) \leq C(\kappa,u_0);
\end{equation}
and because $u$ satisfies \eqref{Toy-Mod1}, it's not difficult to see that
\begin{equation}\label{E4.6}
    -\DD U - \kappa \nabla \dvg U + U\cdot\nabla U + \frac{U}{2}\dvg U - \frac{x}{2}\cdot \nabla U - \frac{U}{2} = 0\quad\mbox{in }\R^3.
\end{equation}
Thus, from Elliptic estimates (alongside ideas we used in the proof of Lemma \ref{L2.2}), we find that
\begin{equation}\label{E4.7}
    \|U\|_{C^k(\overline{B(9/\sqrt{T_2})})} \leq C(k,u_0)\quad (k=0,1,2\ldots)
\end{equation}
Finally, let us explain how we define the semigroup $S_{\kappa}(t)$ and derive the required estimates to close the proof of the theorem.\\
The classical Calderon-Zygmund combined with real interpolation methods allow us to get the existence of a unique function (up to a constant) $q_0$ such that $\DD q_0 = \dvg u_0$ and
\begin{equation}\label{E4.8}
    \|q_0\|_{BMO(\R^3)} + \|\nabla q_0\|_{L_{3,\infty}(\R^3)} \leq c\|u_0\|_{C(\partial B)}.
\end{equation}
Set $u_0^{(1)} := \nabla q_0$ and notice that, because of the uniqueness of $q_0$ and the scaling symmetry of $u_0$, we have that $u_0^{(1)}$ is also $(-1)$-homogeneous. Next, by elliptic estimates we see that $u_0^{(1)}\in C^{\infty}(\partial B)$ and we have (thanks to \eqref{E4.8})
\[
|\partial^{\al} u_0^{(1)}(x)| \leq \frac{C(\al,u_0)}{|x|^{1+|\al|}}.
\]
Introduce now $u_0^{(0)} := u_0 - u^{(1)}_0$ and notice that $\dvg u^{(0)}_0 = 0$ (we should also point out that $\curl u_0^{(1)} = 0$ by definition). We set
\begin{equation}
    S_{\kappa}(t)u_0 = e^{\DD t}u_0^{(0)} + e^{(1+\kappa)\DD t}u_0^{(1)};
\end{equation}
It is clear from this definition that $S_{\kappa}(t)$ is a semigroup. Moreover, using similar arguments as the ones we use in the proof of Lemma \ref{L2.2}, we see that $v(x,t) := S_{\kappa}(t)u_0(x)$ solves the Lam\'e system
\begin{equation}
    \left\{
    \begin{gathered}
    \partial_t v - \DD v - \kappa \nabla\dvg v = 0\quad \mbox{in }\R^3\times (0,\infty)\\
    v|_{t=0} = u_0\quad \mbox{in }\R^3.
    \end{gathered}
    \right.
\end{equation}
Also, we have $\lbd v(\lbd x,\lbd^2 t) = v(x,t)$ for all $\lbd >0$. So, if we set $V := S_{\kappa}(1)u_0$, we have that
\[
v(x,t) = \frac{1}{\sqrt{t}}V\left( \frac{x}{\sqrt{t}}\right)\quad \forall (x,t)\in \R^3\times (0,\infty).
\]
and finally
\[ 
|\partial^{\al}(S_{\kappa}(1)u_0 - u_0)| \leq |\partial^{\al}(e^{\DD}u_0^{(0)} - u_0^{(0)})| + |\partial^{\al}(e^{(1+\kappa)\DD}u_0^{(1)} - u_0^{(1)})|\leq \frac{C(\al,\kappa,u_0)}{(1+|x|)^{3+|\al|}}
\]
by well-known properties of the heat equation. Same machinery for system \eqref{Toy-Mod2}. And this concludes the proof.
\end{proof}
\paragraph{Acknowledgement}
This work was supported by the Engineering and Physical Sciences Research Council [EP/L015811/1]. The author would like to thank Gregory Seregin for the insightful discussions during the completion of this paper.           
\newpage
\bibliographystyle{siam}
\bibliography{refs}

\begin{thebibliography}{10}

\bibitem{Evans86}
{\sc L.~Evans}, {\em {Quasiconvexity and partial regularity in the calculus of
  variations}}, Arch. Ration. Mech. Anal., 95 ({1986}), pp.~227--252.

\bibitem{Jia13}
{\sc {H. Jia, V. \v{S}ver\'{a}k}}, {\em {Minimal $L_3$-initial data for
  Potential Navier-Stokes Singularities}}, SIAM J. MATH. ANAL., 45 ({2013}),
  pp.~1448--1459.

\bibitem{Jia14}
\leavevmode\vrule height 2pt depth -1.6pt width 23pt, {\em {Local-in-space
  estimates near initial time for weak solutions of the Navier-Stokes equations
  and forward self-similar solutions}}, Invent math, 196 ({2014}),
  pp.~233--265.

\bibitem{Hou19}
{\sc F.~Hounkpe}, {\em {On a Toy-Model related to the Navier-Stokes
  Equations}}, Journal of Mathematical Sciences,  ({To appear}).

\bibitem{Kry01}
{\sc N.~Krylov}, {\em {The Heat equation in $L_q((0,T),L_p)$-spaces with
  weights}}, Siam J. Math. Anal., 32 ({2001}), pp.~1117--1141.

\bibitem{Kry08}
\leavevmode\vrule height 2pt depth -1.6pt width 23pt, {\em {Lectures on
  Elliptic and Parabolic Equations in Sobolev Spaces}}, vol.~96, American
  Mathematical Society, {Graduate Studies in Mathematics}~ed., 2008.

\bibitem{Caff82}
{\sc {L. Caffarelli, R.V. Kohn, L. Nirenberg}}, {\em {Partial regularity of
  suitable weak solutions of the Navier-Stokes equations}}, Comm. Pure Appl.
  Math., XXXV (1982), pp.~771--831.

\bibitem{Esc03}
{\sc {L. Escauriaza, G. Seregin, V. \v{S}ver\'{a}k}}, {\em
  {$L_{3,\infty}$-solutions of the Navier-Stokes Equations and Backward
  Uniqueness}}, Russian Mathematical Surveys, Vol. 58 (2003), pp.~211--250.

\bibitem{Lemar02}
{\sc Lemari\'e-Rieusset}, {\em {Recent Developments in the Navier-Stokes
  Problem}}, vol.~431, Chapman \& Hall, Boca Raton, research notes in
  mathematics~ed., 2002.

\bibitem{Lin98}
{\sc F.-H. Lin}, {\em {A New Proof of the Caffarelli-Kohn-Nirenberg Theorem}},
  Comm. Pure Appl. Math., 51 (1998), pp.~241--257.

\bibitem{Lady68}
{\sc {O. A. Ladyzhenskaya, V. A. Solonnikov, N. N. Uraltseva}}, {\em {Linear
  and quasi-linear equations of parabolic type}}, vol.~23, American
  Mathematical Society, Providence, R.I., {Translations of mathematical
  monographs}~ed., 1968.

\bibitem{Rus12}
{\sc W.~Rusin}, {\em {Incompressible 3D Navier-Stokes Equations as a Limit of a
  Nonlinear Parabolic System}}, Journal of Mathematical Fluid Mechanics, 14
  (2012), pp.~383--405.

\bibitem{Ser14}
{\sc G.~Seregin}, {\em {Lecture Notes on Regularity Theory for the
  Navier-Stokes Equations}}, World Scientific, 2014.

\end{thebibliography}
\end{document}